\documentclass[a4paper,12pt]{amsart}

\def\vacio{\small\text{\O}}

\makeatletter
\@namedef{subjclassname@2020}{%
  \textup{2020} Mathematics Subject Classification}
\makeatother

\usepackage{latexsym}
\usepackage{amssymb}
\usepackage{graphicx}
\usepackage{wrapfig}
\usepackage{amsthm,accents,mathtools,abraces}
\usepackage{float}
\usepackage{epsfig}
\usepackage{tikz}
\usetikzlibrary{cd}
\usetikzlibrary{arrows}
\usetikzlibrary{arrows.meta}
\usetikzlibrary{positioning,automata}
\usepackage{multirow}
\usepackage[toc,page]{appendix}
\usepackage[footnotesize]{caption}
\RequirePackage{color}

\usepackage{amscd,enumerate}
\usepackage{amsfonts}

%

\input xy
\xyoption{all}

\usepackage{dutchcal}

\usepackage{hyperref}
\hypersetup{ colorlinks,
linkcolor=blue,
filecolor=green,
urlcolor=blue,
citecolor=blue }

\addtolength{\textwidth}{4cm} \addtolength{\oddsidemargin}{-2cm} \addtolength{\evensidemargin}{-2cm}
\textheight=22.15truecm

\setcounter{secnumdepth}{4}

\newtheorem{lemma}{Lemma}[section]
\newtheorem{corollary}[lemma]{Corollary}
\newtheorem{theorem}[lemma]{Theorem}
\newtheorem{proposition}[lemma]{Proposition}
\newtheorem{remark}[lemma]{Remark}
\newtheorem{definition}[lemma]{Definition}

\newtheorem{example}[lemma]{Example}

\newtheorem{notation}[lemma]{Notation}

\newcommand{\N}{{\mathbb{N}}}

\newcommand{\h}{{\mathcal{H}}}
\def\a{\alpha}
\def\b{\beta}

\def\l{\lambda}

\def\ideal{\mathop{\hbox{\rm ideal}}}
\def\c1{\subseteq^1}
\def\op#1{#1^{\text{\rm op}}}
\newcommand{\uloopr}[1]{\ar@'{@+{[0,0]+(-4,5)}@+{[0,0]+(0,10)}@+{[0,0] +(4,5)}}^{#1}}

\newcommand{\ann}{{\rm{Ann}}}
\def\Pb{\mathop{\hbox{\rm P}_{b^\infty}}}
\def\Pl{\mathop{\hbox{\rm P}_{l}}}
\def\Pc{\mathop{\hbox{\rm P}_{\hbox{\rm\tiny c}}}}
\def\Pec{\mathop{\hbox{\rm P}_{ec}}}
\def\Plce{\mathop{\hbox{\rm P}_{lce}}}
\def\Pbp{\mathop{\hbox{\rm P}_{b_p^{\infty}}}}
\def\soc{\mathop{\hbox{\rm Soc}}}
\def\socs{\mathop{\hbox{\rm \tiny Soc}}}
\def\P{\mathop{\mathcal P}}
\def\top{\mathop{\hbox{\sffamily{\small \bf DCC}}}}%
\def\tops{\mathop{\hbox{\sffamily{\tiny \bf DCC}}}}
\def\ext{\mathop{\hbox{\rm ext}}}
\def\path{\mathop{\hbox{\rm Path}}}
\def\reg{\mathop{\hbox{\rm Reg}}}
\def\s{\mathop{\hat{s}}}
\def\H{\mathop{\mathcal H}}
\def\remove#1{}
\definecolor{turquoise2}{rgb}{0,0.898039,0.933333}
\definecolor{magenta}{rgb}{1,0,1}
\definecolor{olivedrab}{rgb}{0.419608,0.556863,0.137255}
\definecolor{purple2}{rgb}{0.568627,0.172549,0.933333}
\definecolor{amethyst}{rgb}{0.6, 0.4, 0.8}
\definecolor{ao(english)}{rgb}{0.0, 0.5, 0.0}
\definecolor{atomictangerine}{rgb}{1.0, 0.6, 0.4}
\definecolor{amber(sae/ece)}{rgb}{1.0, 0.49, 0.0}
\definecolor{alizarin}{rgb}{0.82, 0.1, 0.26}
\definecolor{auburn}{rgb}{0.43, 0.21, 0.1}
\definecolor{aqua}{rgb}{0.0, 1.0, 1.0}

\title{Invariant ideals in Leavitt path algebras}
\author[C. Gil Canto]{Crist\'obal Gil Canto}\address{Departamento de Matemática Aplicada, Universidad de M\'alaga, Espa\~na.}
\email{cgilc@uma.es}

\author[D. Mart\'{\i}n Barquero]{Dolores Mart\'{\i}n Barquero}
\address{Departamento de Matem\'atica Aplicada, Universidad de M\'alaga, Espa\~na. }

\email{dmartin@uma.es}

\author[C. Mart\'{\i}n Gonz\'alez]{C\'andido Mart\'{\i}n Gonz\'alez}
\address{Departamento de \'Algebra, Geometr\'{\i}a y Topolog\'{\i}a, Universidad de M\'alaga, Espa\~na.}

\email{candido\_m@uma.es}

\thanks{ The  authors are supported by the Junta de Andaluc\'{\i}a  through projects  FQM-336 and UMA18-FEDERJA-119 and  by the Spanish Ministerio de Ciencia e Innovaci\'on   through project  PID2019-104236GB-I00,  all of them with FEDER funds.}
\subjclass[2020]{Primary 16S88, 16D25}
\keywords{Leavitt Path algebra, annihilator, socle, invariant ideal, $\tops$ topology, hereditary and saturated point functors}
\begin{document}

\maketitle
\begin{abstract}
It is known that the ideals of a Leavitt path algebra $L_K(E)$ generated by $\Pl(E)$, by $\Pc(E)$ or by $\Pec(E)$ are invariant under isomorphism. Though the ideal generated by $\Pb(E)$ is not invariant we find its \lq\lq natural\rq\rq\ replacement (which is indeed invariant): the one generated by the vertices of $\Pbp$ (vertices  with  pure  infinite  bifurcations). We also give some procedures to construct invariant ideals from previous known invariant ideals. One of these procedures involves topology, so we introduce the $\tops$ topology and relate it to annihilators in the algebraic counterpart of the work. To be more explicit: if $H$ is a hereditary saturated subset of vertices providing an invariant ideal, its exterior $\ext(H)$ in the
$\tops$ topology of $E^0$ generates a new invariant ideal. The other constructor of invariant ideals
is more categorical in nature. Some hereditary sets can be seen as functors from graphs to sets (for instance $\Pl$, etc). Thus a second method emerges from  the possibility of applying the induced functor to the quotient graph. The easiest example is the known socle chain $\soc^{(1)}(\ )\subset\soc^{(2)}(\ )\subset\cdots$ all of which are proved to be invariant. We generalize this idea to any hereditary and saturated invariant functor. Finally we investigate a kind of composition of hereditary and saturated functors which is associative.
\end{abstract}

\section{Introduction and preliminaries}

As well-known examples of Leavitt path algebras arise the so-called primary colours. They respectively correspond to the ideal of $L_K(E)$ generated by the set of line points $\Pl$, the ideal generated by the vertices that lie on cycles without exits $\Pc$ and the one generated by the set in extreme cycles $\Pec$. Those sets constitute an essential ingredient in the structure of Leavitt path algebras. Firstly, the ideal generated by $\Pl$ is precisely the socle of $L_K(E)$ \cite{AMMS1, AMMS2}. What is more, in \cite{CGKS} the ideal generated by $\Pl$ has recently been proved to be the largest locally left/right artinian ideal inside $L_K(E)$ and respectively, $I(\Pc)$ the largest locally left/right noetherian without minimal idempotents. On the other hand, the ideal generated by $\Pec$ is purely infinite \cite{CGKS,CMMS}. Another important fact is that all of them have been proved to be invariant ideals under rings isomorphisms for Leavitt path algebras: the ideal generated by $\Pl$ in \cite{AMMS1}, $I(\Pc)$ in \cite{ABS} and the ideal generated by $\Pec$ in \cite{CGKS}. 

For an arbitrary graph $E$, the union of the three sets above mentioned that give us the primary colours, together with the set $\Pb$ generate an ideal of $L_K(E)$ which is dense \cite{CGKS}. In this work we give a step forward in studying the invariance of this another key piece of $L_K(E)$. Although in general we see that the ideal generated by $\Pb$ is not invariant, we will determine a subset of vertices inside $\Pb$ in which the answer is positive: the set $\Pbp$ of vertices with pure infinite bifurcations.  Furthermore, the main goal of this paper is to develop a machinery that produces invariant ideals for Leavitt path algebras. In order to do that, we introduce a topology in the set of vertices of a graph $E$ that we will call $\top$ topology. Basically the closed sets of this topology will be the set of vertices that connects to the given set. On the one hand, we will establish graph-theoretic notions for Leavitt path algebras in topological terms. On the other hand, via category theory, we will think of the saturated and hereditary set of a graph as an operator (actually a functor). Roughly speaking, we prove that if $H$ is a hereditary and saturated invariant functor, then the functor associated to the set of vertices which do not connect with $H$ is also invariant. 
Using these tools, we will prove that the ideal generated by the subset of vertices of $\Pb$ which do not connect to $\Pl \cup \Pc \cup \Pec$ is invariant (the so-called set $\Pbp$). In addition, given a hereditary and saturated functor $H$, we will construct a chain of hereditary and saturated functors $H=H^{(1)}\subset H^{(2)}\subset\cdots\subset H^{(n)}\subset\cdots $
with each $H^{(i)}$, $i\ge 1$ being hereditary and saturated and such that $H^{(i)}$ is invariant when $H$ is.

As an extra motivation we would like to link in some future development, the discovery of invariant ideals to the \lq\lq rigidity\rq\rq\ of the automorphism group of a Leavitt path algebra. If the amount of invariant ideals is
large enough the freedom degrees of automorphisms are under control. As a general rule, we can guess that the number of invariant ideals is directly proportional to the rigidity of the automorphism group.
\vskip 0.2cm

This paper is organized as follows. In Section \ref{dcc} we introduce the $\top$ topology of the set of vertices of a graph. 
This topological setting is the counterpart of the algebraic one provided by annihilators. 
The motivation that guide us, is that the exterior (in the $\top$ topology) of an invariant hereditary and saturated set $H$ is again invariant. 
In Subsection \ref{cati} we use some tools borrowed from the theory of categories and functors which are substantial for our work.  We prove that the ideal generated by $\Pb$ is not invariant (we also check that the ideal generated by $\Pec\cup\Pb$ is not invariant). In Section \ref{anni} we use annihilators to produce invariant ideals. In terms of functors 
the main point here is that for every hereditary and saturated invariant functor $H$, its exterior  
$\ext(H)$ is again invariant. Theorem \ref{enjundia} proves that $\Pbp$ is invariant. Then, in Section \ref{rosa} we find how to construct series of functors $H=H^{(1)}\subset\cdots\subset H^{(i)}\subset H^{(i+1)}$ which are invariant when $H$ is. As a motivation we apply this construction to $\Pl$ in Section \ref{soclechain} and we characterize graphically those functors $\Pl^{(n)}$ (Theorem \ref{encasa}). Similarly the same idea holds also to $\Pc$ (Theorem \ref{enbarco2}) and the possibilities are broad enough because we can also perform a kind of composition $H_1*H_2$ in Subsection \ref{colada} which turns out to be associative (Theorem \ref{associative}).
Thus, restricting the universe conveniently it appears to be possible to construct a monoid of 
isomorphism classes of invariant hereditary and saturated functors.

We briefly recall concepts which will be used throughout the paper. The basic definitions on graphs and Leavitt path algebras can be seen in the book \cite{AAS}.

Let $E=(E^0,E^1,r_E,s_E)$ be a \emph{directed graph}. Then we define $\op{E}$ as the graph $\op{E}=((\op{E})^0,(\op{E})^1,r_{\op{E}},s_{\op{E}})$ where $(\op{E})^0=E^0$, $(\op{E})^1=E^1$, $s_{\op{E}}=r_E$ and $r_{\op{E}}=s_E$. 
As usual, we will drop the subscript of the source and target maps when no possible ambiguity arises. We denote by $\hat{E}$  the \emph{extended graph} of $E$; concretely, $\hat{E}=(E^0,E^1 \cup (E^1)^*,r',s')$ where $(E^1)^*=\{e^* \; \vert \; e \in E^1\}$, ${r'|}_{E^1}=r$, ${s'|}_{E^1}=s$, $r'(e^*)=s(e)$ and $s'(e^*)=r(e)$ for all $e \in E^1$. A graph $E$ is {\it row-finite} if $ s^{-1}(v)=\{e \in E^1 \; \vert \; s(e)=v\}$ is a finite set for all $v \in E^0$. In this article we will consider row-finite graphs unless otherwise specified. The set of regular vertices (those which are neither sinks nor infinite emitters) is denoted by ${\rm Reg}(E^0)$.  The set of all paths of a graph $E$ is denoted by ${\rm Path}(E)$. If there is a path from a vertex $u$ to a vertex $v$, we write $u\geq_{E} v$ and if $v \in H \subset E^{0}$, we write $u\geq_{E} H$ (we eliminate the subscript $E$ in case there is no ambiguity about the graph). 
A subset $H$ of $E^{0}$ is called \textit{hereditary} if, whenever $v\in H$ and
$w\in E^{0}$ satisfy $v\geq w$, then $w\in H$. A  set $X$ is
\textit{saturated} if for any  vertex $v$ which is neither a sink nor an infinite emitter, $r(s^{-1}(v))\subseteq X$
implies $v\in X$. We will denote the subset of all subsets of $E^0$ which are hereditary and saturated  by $\h_E$. Given a nonempty subset $X$ of vertices, we define the \emph{tree} of $X$, denoted by $T_E(X)$, as the set
$$T_E(X):=\{u\in E^0 \ \vert \  x\geq u \ \text{for some} \ x\in X\}.$$ When there is no possible confusion we denote by $T(X)$. 
This is a hereditary subset of $E^0$.  The notation $\overline{X}$ ($\overline{X}^E$ if we want to emphasize the graph $E$) will be used for the hereditary and saturated closure of a non empty set $X$, which is built, for example, in \cite[Lemma 2.0.7]{AAS} in the following way: Let $\Lambda^0(X) := T(X)$ and
 \begin{equation}\label{onion}
  \Lambda^{n+1}(X):=\{v \in \text{Reg}(E^0): r(s^{-1}(v))\in \Lambda^n(X)\}\cup \Lambda^n(X).   
 \end{equation} Then $\overline{X}=\cup_{n\geq 0} \Lambda^n(X)$. If there is no confusion with respect to the set $X$ we are considering, we simply write $\Lambda^{n}$.
A  vertex $u$ in a graph $E$ is a {\it bifurcation, or there is a bifurcation at $u$} if $s^{-1}(u)$ has at least two elements. A vertex $v$ in a graph $E$ will be called a {\it line point} if there are neither bifurcations nor cycles at any vertex $w \in T(v)$. We will denote by $\Pl{(E)}$ the set of all line points of $E^0$.
An {\it exit} for a path $\mu = e_1 \ldots e_n$ with $n \in \mathbb{N}$, is an edge $e $ such that $s(e)=s(e_i)$ for some $i$ and $e \ne e_i$. We say that $E$ satisfies \emph{Condition} (L) if every cycle in $E$ has an exit. We denote by $\Pc(E)$ the set of vertices of the graph lying in cycles without exits.
A cycle $c$ in a graph $E$ is an {\it extreme cycle} if $c$ has exits and for every $\l\in \hbox{Path}(E)$ starting in a vertex in $c^0$ there exists $\mu \in \hbox{Path}(E)$  such that $0\ne\l\mu$ and $r(\l\mu)\in c^0$. We will denote by $\Pec(E)$ the set of vertices which belong to extreme cycles. 
Besides, the set of all vertices $v \in E^0$ whose tree $T(v)$ contains infinitely many bifurcation vertices or at least one infinite emitter is denoted by $\Pb{(E)}$. Again we will eliminate $E$ in these sets if there is no ambiguity about the graph we are considering.
If $H$ is a hereditary subset of $E^0$, then we can define $I(H)$ as in \cite[Lemma 2.4.1]{AAS}. The set of natural numbers (included $0$) will be denoted by $\N$. For a given set $X$, we will denote by $\P(X)$ the power set of $X$. 
In an algebra $A$ the ideal generated by an element $z\in A$ will be denoted $\ideal(z)$.

\section{A graph topology}\label{dcc}

In this section we define the $\top$ topology  which has sense in any graph. We prove some results relating the topology with algebraic properties of the associated Leavitt path algebra. Since density in the $\top$
implies density of the related ideal we have introduced the term \lq\lq connection\rq\rq\ within the topology name. We see that certain properties of subsets of $E^0$, for instance, \lq\lq being hereditary\rq\rq\ is a topological property, so this property is preserved under homeomorphisms. A corollary of the existence of a homemorphism between two graphs is the preservation of certain elements (for instance the cardinal of the initial and of the terminal set of vertices). In certain type of graphs this induces a preservation of the number of sinks and/or sources (see Remark \ref{pollo}). \remove{The optimum manifestation of this conservation laws, is when the graph is acyclic and satisfies Condition \hbox{SING}. In this case, homeomorphisms between the set of vertices of two graphs induce isomorphism between the lattices of ideals of the corresponding Leavitt path algebras (paragraph just before \eqref{eqiv}).} \newline
\indent However the main reason to consider this topology is (roughly speaking) that when an ideal $I(H)$ is invariant under isomorphism, then $I(\ext(H))$ is also invariant (here $\ext(H)$ is the exterior in the $\top$ topology of $H$). This is proved in Proposition \ref{dormit}(\ref{leo}). 
Thus the exterior operation $H\mapsto\ext(H)$ is one of the relevant tools in the construction of invariant ideals. We prove that the shift process induces a $\top$ continuous map between the set of vertices (Theorem \ref{shiftcont}).

\begin{definition}{\rm  Let $E$ be a graph and define $c\colon\P(E^0)\to\P(E^0)$ the map such that for any $A\subset E^0$ we have $c(A):=\{v\in E^0 \; \vert \; v\ge A\}$. Then this map defines a Kuratowski closure operator that is: 
\begin{itemize}
\item[(i)] $c(\vacio)=\vacio$; 
\item[(ii)] $A\subset c(A)$; 
\item[(iii)] $c(A)=c(c(A))$, and 
\item[(iv)] $c(A\cup B)=c(A)\cup c(B)$, for any $A,B\subset E^0$. 
\end{itemize}
Consequently there is a topology in $E^0$ whose closed sets are those $A\subset E^0$ such that $A=c(A)$ (see \cite[Chapter III, Section 5, Theorem 5.1]{DU}). We will call this the $\top$ {\it topology} (for $\top=\hbox{Directed Connection Closure}$).
}
\end{definition}

The open sets are $E^0\setminus c(A)$ for $A\subset E^0$, so an open set $O$ is one for which there is a subset $A\subset E^0$ such that $O=\{v\in E^0\colon v\not\ge A\}$. We will use the notation $A'$ for the set of all vertices $v$ such that $v\not\ge A$ (in fact $A'$ is the exterior of $A$ in the $\top$ topology as we will see later). 

\begin{remark}\rm
For a hereditary subset $H\subset E^0$ we always have 
$\overline{H}\subset c(H)$ (see \cite[Lemma 1.2]{CMMSS}). In particular, if $\overline{H}=E^0$, the set $H$ is dense in the $\top$ topology. Furthermore, the characterization of the (topological) density of a hereditary set $H$ can be given in terms of the density of the ideal $I(H)$.
\end{remark}
\begin{lemma}\label{romeo}
Let $H$ be a hereditary subset $H\subset E^0$.  Then $H$ is dense in the $\top$ topology if and only if the ideal $I(H)$ is dense in $L_K(E)$.  
\end{lemma}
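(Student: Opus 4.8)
The plan is to translate both sides of the equivalence into statements about annihilators and then reduce everything to vertices. On the topological side, since $c$ is the Kuratowski closure operator of the $\top$ topology, the closure of $H$ is exactly $c(H)$; hence \emph{$H$ is dense in the $\top$ topology} means precisely $c(H)=E^0$, i.e. every vertex $v$ satisfies $v\ge H$. On the algebraic side, $L_K(E)$ is semiprime, so a two-sided ideal is dense (essential) if and only if its annihilator vanishes; I would therefore replace ``$I(H)$ is dense'' by ``$\ann(I(H))=0$''. The goal thus becomes the single equivalence $c(H)=E^0\iff\ann(I(H))=0$.

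First I would observe that $I(H)=I(\overline{H})$, since adjoining saturated vertices does not enlarge the generated ideal; consequently $I(H)$ is a graded ideal, and so $\ann(I(H))$ is again graded, because the annihilator of a graded ideal of a $\Z$-graded ring is graded (the homogeneous components of an annihilating element annihilate separately, by comparing degrees). Since every nonzero graded ideal of $L_K(E)$ contains a vertex, we get $\ann(I(H))=0$ if and only if $\ann(I(H))$ contains no vertex, that is, no vertex $v$ satisfies $vI(H)=0=I(H)v$. This reduces the problem to deciding which vertices annihilate $I(H)$.

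The computational heart is then the claim that a vertex $v$ annihilates $I(H)$ if and only if $v\not\ge H$. Using the spanning monomials $\alpha\beta^{*}$ of $I(H)$ with $r(\alpha)=r(\beta)\in\overline{H}$, a product $v\,\alpha\beta^{*}$ (resp. $\alpha\beta^{*}v$) is nonzero only when $v=s(\alpha)$ (resp. $v=s(\beta)$), in which case $\alpha$ (resp. $\beta$) is a path from $v$ into $\overline{H}$, forcing $v\ge\overline{H}$; conversely, if $v\ge H$ through a path $\gamma$ ending at $u\in H$, then $\gamma=\gamma u\in I(H)$ and $v\gamma=\gamma\ne 0$, so $v$ does not annihilate $I(H)$. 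Here I would invoke the Remark's inclusion $\overline{H}\subset c(H)$ to see that $v\ge\overline{H}$ is equivalent to $v\ge H$, so that connecting to $\overline{H}$ and connecting to $H$ coincide. Combining the three steps yields $\ann(I(H))=0\iff$ every vertex connects to $H\iff c(H)=E^{0}$, which is exactly the desired equivalence.

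The step I expect to be the main obstacle is pinning down the precise meaning of ``dense ideal'' and justifying its equivalence with $\ann(I(H))=0$: once density is identified with essentiality this is merely semiprimeness, but should a different notion of density be intended, I would instead argue directly with the Reduction Theorem. In that variant, $c(H)=E^{0}$ forces $I(H)$ to meet every nonzero ideal, since after reduction such an ideal contains either a vertex $w$ (which connects to $H$ and hence supplies a path lying in both ideals) or a polynomial $p(c)$ in a cycle without exits, whose vertex set must then lie in $H$ by heredity, giving $p(c)\in I(H)$.
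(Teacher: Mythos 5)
Your proof is correct, but it takes a genuinely different route from the paper's. The paper disposes of this lemma in two lines by citing an external result (\cite[Proposition 1.10]{CMMSS}), which states precisely that $I(H)$ is dense in $L_K(E)$ if and only if every vertex connects to $H$; the topological reformulation $c(H)=E^0$ then makes the lemma immediate. You instead prove that cited fact from scratch: you identify density with vanishing of the two-sided annihilator (legitimate, since $L_K(E)$ is semiprime, so essential ideals, ideals with zero annihilator, and dense ideals all coincide, and this matches the notion used in \cite{CMMSS}), then use that $I(H)=I(\overline{H})$ is graded, that annihilators of graded ideals are graded, and that nonzero graded ideals of a row-finite Leavitt path algebra contain vertices, to reduce everything to the vertex-level computation $v\in\ann(I(H))\iff v\not\ge H$. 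That computation is sound, including the use of the paper's remark $\overline{H}\subset c(H)$ to pass from $v\ge\overline{H}$ back to $v\ge H$. What your approach buys is self-containedness, and it in fact anticipates the paper's own later machinery: your vertex-annihilation claim is exactly the content of Proposition \ref{here} in Section \ref{anni}, namely $\ann(I(H))=I(H')$ with $H'=\{v\in E^0 : v\not\ge H\}$, so the lemma becomes the special case ``$\ann(I(H))=0\iff H'=\vacio$.'' What the paper's route buys is brevity and generality (the cited proposition is stated for arbitrary graphs, whereas your step ``nonzero graded ideals contain vertices'' is cleanest under the paper's standing row-finiteness assumption). Your fallback sketch via the Reduction Theorem is also essentially the argument behind the cited proposition, though as written it only covers the forward implication of essentiality; your main annihilator argument, however, covers both directions, so nothing is missing.
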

\begin{proof}
 If $c(H)=E^0$ then for any $v\in E^0$ we have $v\ge H$ hence applying \cite[Proposition 1.10]{CMMSS} the ideal $I(H)$ is dense in
$L_K(E)$. Reciprocally if $I(H)$ is dense, then any vertex $v$ connects to $H$ hence $E^0=c(H)$.
\end{proof}

Some graph properties are invariant under graph homeomorphisms. We list some of them in the following propositions. Recall that a \emph{clopen} set in a topology is a set which is both open and closed.  We will use also the notation $A^c$ for the complementary of $A$ (if the ambient universe is clear).

\begin{proposition}
Let $E$ be an arbitrary graph. Then $E$ is a connected graph if and only if $E$ is connected in the sense of the $\top$ topology.
\end{proposition}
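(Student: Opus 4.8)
The plan is to translate topological connectedness into the combinatorics of the reachability relation, by first identifying the open sets of the $\top$ topology. I would begin by showing that a subset $O\subseteq E^0$ is open if and only if it is hereditary. Indeed, $O$ is open precisely when its complement $A=O^c$ is closed, i.e. $c(A)=A$; unwinding the definition $c(A)=\{v\in E^0 : v\ge A\}$, this says that whenever $v\ge a$ for some $a\in A$ we have $v\in A$, which is exactly the assertion that $O=A^c$ is hereditary (if $v\in O$ and $v\ge w$, then $w\in O$, since otherwise $v\ge w\in A$ would force $v\in A$). This identification of the open sets with the hereditary subsets of $E^0$ is the conceptual heart of the argument.

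Next I would characterise the clopen sets. A set $C$ is clopen if and only if both $C$ and $C^c$ are open, so by the previous step $C$ is clopen if and only if both $C$ and $C^c$ are hereditary. I then claim this happens exactly when no edge of $E$ joins $C$ to $C^c$: if $e\in E^1$ has $s(e)\in C$ and $r(e)\in C^c$, then $s(e)\ge r(e)$ contradicts heredity of $C$, and symmetrically an edge from $C^c$ to $C$ contradicts heredity of $C^c$; conversely, if every edge keeps both its endpoints on one side of a partition $E^0=C\sqcup C^c$, then an easy induction along paths shows that both $C$ and $C^c$ are hereditary. Thus the proper nonempty clopen subsets of $(E^0,\top)$ are precisely the separations of $E^0$ into two nonempty parts with no edges running between them in either direction.

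Finally I would conclude. Recall that $E$ is a connected graph exactly when its underlying undirected graph is connected, that is, when $E^0$ admits no partition into two nonempty subsets with no edges crossing. By the clopen characterisation obtained above, this is equivalent to the nonexistence of a proper nonempty clopen subset of $(E^0,\top)$, which is the definition of connectedness of the $\top$ topology. Hence $E$ is a connected graph if and only if $E$ is connected in the sense of the $\top$ topology.

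The only genuine obstacle is the first step, namely pinning down the precise duality between open sets and hereditary sets while keeping straight the direction of $\ge$; everything afterwards is a routine translation between the partition and the clopen set. I would also make a point of fixing the meaning of \lq\lq connected graph\rq\rq\ as \emph{weak} (undirected) connectivity, since strong connectivity would already fail for a single edge $u\to v$, whose associated $\top$ topology is nonetheless connected, and so would break the equivalence.
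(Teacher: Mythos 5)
Your proof is correct, but it takes a genuinely different route from the paper's. The paper argues directly on a clopen set $A$: for the forward direction it picks $v\in A$ and, for an arbitrary $w\in E^0$, invokes the dichotomy \lq\lq $w\ge v$ or $v\ge w$\rq\rq, using closedness of $A$ in the first case and closedness of $A^c$ in the second; for the converse it decomposes a disconnected graph into connected components and checks that each component is clopen. You instead prove two structural lemmas --- the open sets of the $\top$ topology are exactly the hereditary subsets of $E^0$, and the proper nonempty clopen sets are exactly the separations $E^0=C\sqcup C^c$ into two nonempty parts with no edges crossing in either direction --- after which both implications drop out simultaneously. Your route buys something real: the paper's dichotomy is not a consequence of weak connectedness (in $u\to v\leftarrow w$ neither $u\ge w$ nor $w\ge u$ holds), so as written that step of the paper is a gap, and it is repaired precisely by the induction along undirected paths that you carry out inside your clopen characterization. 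Your closing remark that \lq\lq connected\rq\rq\ must mean weak (undirected) connectivity is also on target: the paper never defines the term, and the equivalence fails for stronger notions, as your example of a single edge $u\to v$ shows. As a side benefit, your \lq\lq open $=$ hereditary\rq\rq\ lemma is a cleaner counterpart of the paper's later observation (via $H=\cap_{w\notin H}\ext(w)$) that being hereditary is a topological property, and it is reusable elsewhere in this circle of ideas.
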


\begin{proof}
First suppose that $E$ is a connected graph. Let $A$ be a subset of $E^0$ which is clopen in the $\top$ topology. We will see that $A=\vacio$ or $A=E^0$. Suppose $A \neq \vacio$ and take $v \in A$. Now for every $w \in E^0$ we have two possibilities: $w \ge v$ or $v \ge w$. If $w \ge v$ then $w \ge A$ and therefore $w \in c(A)=A$, so $w \in A$. In the second case $v \ge w$, if we had  $w \notin A$, then $v \ge A^c$ hence $v\in c(A^c)=A^c$ and so $v \in A^c$, that is, $v \notin A$ which is a contradiction. In short $w \in E^0$ implies $w \in A$, so $E^0 = A$.

For the converse suppose $E$ is connected in the sense of the $\top$ topology. On the contrary, assume $E$ is not a connected graph: $E^0= {\bigsqcup}_{i\in I} E_i^0$ with each $E_i$ a connected graph. We claim that every $E_i^0$ is closed because $c(E_i^0)=\{ v \in E^0 \; \vert \; v \ge E_i^0\} \subseteq E_i^0 \subseteq c(E_i^0)$. Also $E_i^0$ is open. In order to prove this claim, write $E_i^0 = (\bigsqcup_{j \neq i} E_j^0)^c$. Now $\bigsqcup_{j \neq i} E_j^0$ is closed since $v \ge \bigsqcup_{j\ne i} E_j^0$ implies $v \in \bigsqcup_{j\ne i} E_j^0$. To sum up, $E_i^0$ is clopen and since $E$ is connected, in the sense of the $\top$ topology, then there exists an unique $i$ such that $E_i^0 \neq \vacio$ and $E^0=E_i^0$.
\end{proof}

Some purely graph-theoretic notions can be formalized in topological terms.
Remind that for a subset $S$ of a topological space $X$, the exterior of $S$, denoted $\ext(S)$ is the complementary of the closure of $S$, that is, $\ext(S)=c(S)^c$. 

\begin{proposition}
A  subset $H\subset E^0$ is hereditary if and only if $H=\cap_{w\notin H}\ext(w)$.
\end{proposition}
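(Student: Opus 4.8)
The plan is to unwind the right-hand side into a transparent description and then recognize the heredity condition sitting inside it. By definition $\ext(w)=c(\{w\})^c=\{v\in E^0 : v\not\ge w\}$, so a vertex $v$ lies in $\cap_{w\notin H}\ext(w)$ exactly when $v\not\ge w$ for every $w\notin H$. Taking the contrapositive of this universal statement, it is equivalent to say that every $w$ with $v\ge w$ must belong to $H$, i.e. $T(v)\subseteq H$. Thus I would first establish the identity
$$\cap_{w\notin H}\ext(w)=\{v\in E^0 : T(v)\subseteq H\},$$
which holds for an arbitrary subset $H$, independently of any heredity hypothesis.

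With this reformulation in hand, one inclusion becomes automatic. Since $v\ge v$, we have $v\in T(v)$, so whenever $T(v)\subseteq H$ we immediately get $v\in H$; hence $\cap_{w\notin H}\ext(w)\subseteq H$ for every $H$. This reduces the claimed equality to the reverse inclusion $H\subseteq\{v : T(v)\subseteq H\}$. But this inclusion asserts precisely that $v\in H$ implies $T(v)\subseteq H$, that is, that $v\in H$ and $v\ge w$ force $w\in H$ — which is verbatim the definition of $H$ being hereditary. Combining the automatic inclusion with this observation yields the desired biconditional: $H=\cap_{w\notin H}\ext(w)$ if and only if $H$ is hereditary.

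I expect no genuine obstacle here; the argument is a direct unwinding of the closure operator together with a single quantifier negation. The only point requiring care is the bookkeeping in the first step — correctly flipping the quantifier over $w\notin H$ and keeping in mind the standing convention $v\ge v$, so that $v\in T(v)$ and the trivial inclusion $\cap_{w\notin H}\ext(w)\subseteq H$ is valid. Once that is fixed, both inclusions fall out of the definitions, and the equivalence with heredity is immediate.
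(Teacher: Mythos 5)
Your proof is correct and is essentially the same argument as the paper's: both reduce to unwinding $\ext(w)=\{v : v\not\ge w\}$, using reflexivity $v\ge v$ for the inclusion $\cap_{w\notin H}\ext(w)\subseteq H$, and recognizing that the other inclusion is verbatim the heredity condition. Your reformulation via $\{v : T(v)\subseteq H\}$ and the observation that one inclusion holds for arbitrary $H$ is a slightly tidier packaging of the paper's two-inclusions-plus-converse argument, but not a different route.
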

\begin{proof} Assume that $H$ is hereditary. If $v\in H$ and $w\notin H$ then 
$v\not\ge w$ hence $v\in\ext(w)$. So
$H\subset\cap_{w\notin H}\ext(w)$. On the other hand, if $v\in \cap_{w\notin H}\ext(w)$
and $v\notin H$ we have $v\in\ext(v)$ which is a contradiction. So far we have proved that if $H$ is hereditary, then the equality $H=\cap_{w\notin H}\ext(w)$ holds. Conversely, if $H=\cap_{w\notin H}\ext(w)$ then $H$ is hereditary. Indeed, taking $v\in H$ and $v\ge w$, if $w\notin H$ then $v\in\ext(w)$ which means $v\not\ge w$ which is a contradiction.
\end{proof}
Consequently being hereditary is a topological property. Since the intersection of a  family of hereditary subsets is a hereditary subset, also the hereditary closure of a subset $X\subset E^0$ is a topological construction: the intersection of all the hereditary subsets containing $X$.   
\medskip

Recall that a subset $H\subset E^0$ is said to be saturated if $\forall v\in \reg(E^0)$, where $r(s^{-1}(v))\subset H$ this implies $v\in H$. We can extend the source function $s\colon E^1\to E^0$ to a function ${\s}\colon\path(E)\setminus E^0\to E^0$ where ${\s}(\lambda)=s(f_1)$ for $\lambda=f_1\cdots f_n\in\path(E)$. When $H$ is hereditary the following are equivalent:
\begin{equation}\label{odinurg}
r(s^{-1}(v))\subset H \Leftrightarrow r({\hat {s}}^{-1}(v))\subset H.
\end{equation}
Indeed $s^{-1}(v)\subset{\s}^{-1}(v)$ hence $r(s^{-1}(v))\subset r({\s}^{-1}(v))$, implying the right  to left implication. Now, if $r(s^{-1}(v))\subset H$ and $\l\in{\s}^{-1}(v)$ then writing $\l=f_1\cdots f_n$ we have 
$r(f_1)\in H$ hence $H$ being hereditary implies $r(\l)\in H$. Thus 
$r({\s}^{-1}(v))\subset H$. The equivalence given in \eqref{odinurg} allows a reformulation of the definition of saturated hereditary subset. A hereditary subset $H$ is saturated if for any regular vertex $v$ one has the implication
\begin{equation}\label{bbva}
r({\s}^{-1}(v))\subset H \Leftrightarrow v\in H.
\end{equation}

Being saturated is not a topological construction as the following example shows.

\begin{example}\label{nublado}\rm\par
In the graph $E$ of Figure \ref{soleado}, 

\begin{figure}[ht]
\hbox{\hskip 3cm
\begin{tikzpicture}[->]
\draw[fill=black] (-0.1,0) circle (1pt);
\draw[fill=black] (1.1,0.85) circle (1pt);
\draw[fill=black] (1.3,-.5) circle (1pt);
\node at (-1.3,0){$E$:};
\node at (-0.4,-0.1) {\tiny $v$};
\node at (1.5,-0.4) {\tiny $w$};
\node at (1.3,1){\tiny $u$};
\node at (0.4,0.6) {\tiny $f$};
\node at (0.4,-0.5) {\tiny $g$};
\draw [->] (-0.15,-0.1) arc (360:40:9pt);
\draw[] (0,0.1) -> (1,0.8);
\draw[] (0,-0.1) -> (1.2,-.5);
\end{tikzpicture}
\hskip 1cm
\begin{tikzpicture}[->]
\draw[fill=black] (-0.1,0) circle (1pt);
\draw[fill=black] (1.1,0.85) circle (1pt);
\draw[fill=black] (1.3,-.6) circle (1pt);
\node at (-1,0){$F$:};
\node at (-0.4,0) {\tiny $v'$};
\node at (1.55,-0.4) {\tiny $w'$};
\node at (1.3,1){\tiny $u'$};
\node at (0.4,0.6) {\tiny $f'$};
\node at (0.4,-0.6) {\tiny $g'$};
\draw[] (0,0.1) -> (1,0.8);
\draw[] (0,-0.1) -> (1.2,-.6);

\end{tikzpicture}
}\caption{}\label{soleado}
\end{figure}

\noindent the closed subspaces of the $\top$ topology are $\vacio$, $\{v\}$, $\{v,u\}$, $\{v,w\}$ and $E^0$. Meanwhile in the graph $F$ the closed ones are $\vacio$, $\{v'\}$, $\{v',u'\}$, $\{v',w'\}$ and $F^0$. We can define a homeomorphism $\tau: E^0 \rightarrow F^0$ by $\tau(a)=a'$ for every $a \in E^0$. Observe that $\overline{\{u,w\}}=\{u,w\} \neq E^0$ but $\overline{\{\tau(u),\tau(w)\}}=\overline{\{u',w'\}}=F^0$. This shows that hereditary and saturated subsets are not preserved under homeomorphisms.

\end{example}

In general in an arbitrary graph $E$ it satisfies that 
$$\big(\bigcup_{v\in c(w)} \{w\}\big)\setminus\{v\}=r({\hat s}^{-1}(v))\setminus\{v\}.$$
However if $E$ is an acyclic graph, for any vertex $v$ (not necessarily regular) we have $$\big(\bigcup_{v\in c(w)} \{w\}\big)\setminus\{v\}=r({\hat s}^{-1}(v)).$$
This implies that an hereditary $H$ is saturated if and only if 
$\big(\bigcup_{v\in c(w)} \{w\}\big)\setminus\{v\}\subset H$ implies $v\in H$.
Thus, for acyclic graphs, hereditary and saturated subsets are described in topological terms. So if $E^0$ and $F^0$ are homeomorphic as topological spaces and acyclic, the sets $\H_E$ and $\H_F$ are bijective: more precisely if $f\colon E^0\to F^0$ is a homeomorphism, then the map $f^*\colon\H_E\to \H_F$ such that 
$f^*(H)=f(H)$ is bijective. 

\begin{definition}\rm
Let $E$ be an arbitrary graph.
A vertex $v$ of $E^0$ is called an \emph{initial vertex} if 
\begin{equation}\label{eqiv}
    s(r^{-1}(v)) \subseteq\{v\}.
\end{equation}
If $v$ is initial, then any edge in $r^{-1}(v)$ is a loop. 
Let $n\in\N$, an \emph{inital $n$-looped vertex} $v$ is an initial vertex such that $\vert r^{-1}(v)\vert=n$. 
For instance, any source is an $0$-looped initial vertex.
A  vertex $v$ of $E^0$ is called an \emph{terminal vertex} if 
\begin{equation}\label{eqtv}
r(s^{-1}(v))\subseteq \{v\}.
\end{equation}
If $v$ is terminal, then any edge in $s^{-1}(v)$ is a loop.
A \emph{terminal} $n$-looped vertex $v$ is a terminal vertex such that  $\vert s^{-1}(v)\vert=n$. 
For example, any sink is a terminal $0$-looped vertex.\medskip

For instance, in graph $E$ below, the vertex $v$ is a initial $3$-looped vertex and in the graph $\op{E}$, the vertex $v$ is a terminal $3$-looped vertex.
\[
\xygraph{!{(0.25,0.5)}*+{E\colon}
!{<0cm,0cm>;<1cm,0cm>:<0cm,1cm>::}
!{(2,0)}*+{\cdots}="d"
!{(3,0)}*+{\bullet}="g"
!{(3,0.25)}*+{\hbox{\tiny $v$}}="v"
"g":"d" 
"g" :@(ul,ur) "g"
"g" :@(dl,dr) "g"
"g" :@(ur,dr) "g"
}\hskip2cm
\xygraph{!{(0.25,0.5)}*+{E^{\rm op}\colon}
!{<0cm,0cm>;<1cm,0cm>:<0cm,1cm>::}
!{(2,0)}*+{\cdots}="d"
!{(3,0)}*+{\bullet}="g"
!{(3,0.25)}*+{\hbox{\tiny $v$}}="v"
"d":"g" 
"g" :@(ul,ur) "g"
"g" :@(dl,dr) "g"
"g" :@(ur,dr) "g"
}\]

\end{definition}

\begin{remark}\rm
The formulas \eqref{eqiv} and \eqref{eqtv} are equivalent to the corresponding formulas in which $r,s$ are the natural extensions $r,s: \path{(E)} \rightarrow E^0$.
\end{remark}

\begin{proposition}
Let $v\in E^0$, then:
\begin{enumerate}
\item The following are equivalent:
\begin{enumerate}
    \item\label{iv} $v$ is initial.
    \item\label{fuente} $c(v)=\{v\}$.
    \item\label{alberca} $v$ is an initial $n$-looped vertex for some $n\in\N$.
\end{enumerate}
\item Analogously, these are equivalent: 
\begin{enumerate}
    \item\label{tv} $v$ is terminal.
    \item\label{s_1} $\bigcup_{v\in c(w)}\{w\}=\{v\}$.
    \item\label{s_2} $v$ is a terminal $n$-looped vertex for some $n \in \N$.
\end{enumerate}

\hskip -1.7cm In particular:
\item\label{initial} If $v$ is not an initial $n$-looped  vertex for any $n\ge 1$, then $v$ is a source if and only if $c(v)=\{v\}$.

\item\label{terminal} If $v$ is not an terminal $n$-looped  vertex for any $n\ge 1$, then $v$ is a sink if and only if $\left(\bigcup_{v\in c(w)}\{w\}\right)\setminus\{v\}=\vacio$.
\end{enumerate}
\end{proposition}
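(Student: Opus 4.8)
The plan is to establish the three‑way equivalence in part~(1) first, deduce part~(2) by a dual argument, and obtain parts~(\ref{initial}) and~(\ref{terminal}) as formal consequences.

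For part~(1), recall that $c(v)=\{w\in E^0\mid w\ge v\}$ and that $v\in c(v)$ always holds, since a vertex is a trivial path. To prove (\ref{iv})$\Rightarrow$(\ref{fuente}) I would argue that if $v$ is initial and $w\ge v$ with $w\ne v$, then a nontrivial path $e_1\cdots e_n$ from $w$ to $v$ has $r(e_n)=v$, so $e_n\in r^{-1}(v)$ and hence $s(e_n)=v$ by initiality; peeling off the edges one at a time (each $e_i$ satisfies $r(e_i)=v$, so initiality forces $s(e_i)=v$ as well, i.e. $e_i$ is a loop) yields $s(e_1)=v=w$, a contradiction, whence $c(v)=\{v\}$. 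For (\ref{fuente})$\Rightarrow$(\ref{iv}), any $e\in r^{-1}(v)$ gives $s(e)\ge v$, so $s(e)\in c(v)=\{v\}$ and thus $s(r^{-1}(v))\subseteq\{v\}$. The implication (\ref{alberca})$\Rightarrow$(\ref{iv}) is immediate from the definition, and for (\ref{iv})$\Rightarrow$(\ref{alberca}) the key observation is that for an initial vertex every edge of $r^{-1}(v)$ is a loop, so $r^{-1}(v)\subseteq s^{-1}(v)$; since $E$ is row-finite $s^{-1}(v)$ is finite, forcing $\vert r^{-1}(v)\vert=n$ for some $n\in\N$.

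Part~(2) is the mirror image. Using the identity $\bigcup_{v\in c(w)}\{w\}=\{w\in E^0\mid v\ge w\}=T(v)$, the same peeling argument applied to paths emanating from $v$ proves (\ref{tv})$\Leftrightarrow$(\ref{s_1}), while (\ref{tv})$\Leftrightarrow$(\ref{s_2}) again reduces to a finiteness statement. Here I would be careful \emph{not} to simply invoke part~(1) for the opposite graph $\op E$, since $\op E$ need not be row-finite; instead I note directly that when $v$ is terminal every outgoing edge is a loop, so $s^{-1}(v)$ consists of loops and is finite by row-finiteness of $E$, giving $\vert s^{-1}(v)\vert=n$. This finiteness point is the only genuinely delicate step in the whole proposition.

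Finally, parts~(\ref{initial}) and~(\ref{terminal}) follow formally. Since a source is precisely a $0$-looped initial vertex, and $\bigl(\bigcup_{v\in c(w)}\{w\}\bigr)\setminus\{v\}=\vacio$ is equivalent to $\bigcup_{v\in c(w)}\{w\}=\{v\}$ (because $v$ always lies in that union), if $c(v)=\{v\}$ then $v$ is initial by part~(1), hence an initial $n$-looped vertex for some $n$; the hypothesis that $v$ is not $n$-looped for any $n\ge1$ forces $n=0$, i.e. $v$ is a source, and the converse direction is immediate from (\ref{iv})$\Rightarrow$(\ref{fuente}). Part~(\ref{terminal}) is obtained in the same way from part~(2), replacing \lq\lq source/initial\rq\rq\ by \lq\lq sink/terminal\rq\rq.
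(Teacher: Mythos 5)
Your proof is correct and takes essentially the same approach as the paper's: the same loop-peeling argument shows that an initial (resp.\ terminal) vertex satisfies $c(v)=\{v\}$ (resp.\ $\bigcup_{v\in c(w)}\{w\}=\{v\}$), the converses follow from the same one-edge observation, and parts (3) and (4) are deduced formally just as the paper does. The only place you go beyond the paper is the equivalence with the $n$-looped conditions, which the paper dismisses as straightforward; your explicit appeal to row-finiteness (and your caution about not simply dualizing to $\op{E}$, which need not be row-finite) is a correct and welcome filling-in of that detail rather than a different approach.
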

\begin{proof}

First for proving  \eqref{iv}$\Rightarrow$\eqref{fuente} suppose that $v$ is initial. Take $w \in c(v)$ and assume there is a path $\l$ such that $s(\l)=w$ and $r(\l)=v$. If $\l$ is a trivial path,  we have $w=v$ and $w=v \in c(v)$. Otherwise, $\l=f_1\ldots f_n$ with $r(f_n)=v$ and since $f_n \in r^{-1}(v)$ we get $s(f_n)=v$ so that $f_n$ is a loop based at $v$. In general, if $s(f_i)=v$ then applying the same argument we obtain $s(f_{i-1})=v$ and finally $f_i$ is a loop based at $v$ for every $i \in \{1,\ldots,n\}$. Therefore $w=v$. For \eqref{fuente}$\Rightarrow$\eqref{iv}, assume $c(v)=\{v\}$. Consider $w \in s(r^{-1}(v))$. If $w = v$ we are done. If $w \neq v$, then there exists an edge $f$ with $s(f)=w$ and $r(f)=v$. This implies $w \ge v$ and by hypothesis $w = v$ giving a contradiction. In conclusion $s(r^{-1}(v)) \subseteq \{v\}$, that is, $v$ is an initial vertex. Also observe that \eqref{iv}$\Leftrightarrow$\eqref{alberca} is straightforward.

For proving \eqref{tv}$\Rightarrow$\eqref{s_1} assume that $v$ is terminal and take $w\in E^0$ such that $v\ge w$, then either $v=w$ in which case we are done or there is a nontrivial path $\l$ from $v$ to $w$. Then the first arrow $f$ of $\l$ is in
$s^{-1}(v)$ hence $r(f)\in r(s^{-1}(v))\subseteq\{v\}$ and we have $r(f)=v$. Thus the first arrow of $\l$ is a loop. Applying this argument repeatedly, we get that $w=r(\l)=v$. 
For \eqref{s_1}$\Rightarrow$\eqref{tv} 
consider a vertex $u \in r(s^{-1}(v))$, and then there exists an edge $e$ such that $s(e)=v$ and $r(e)=u$. So $u \in \bigcup_{v\in c(w)}\{w\}=\{v\}$ giving $u=v$. Observe that \eqref{tv} is equivalent to $\eqref{s_2}$.
Finally, (\ref{initial}) and (\ref{terminal}) are  direct consequences of the previously proved items.
\end{proof}

\begin{remark}\label{pollo}{\rm We have the equality of the number of initial (respectively terminal) vertices in homeomorphic graphs $E$ and $F$, more precisely, homeomorphisms between graphs induce bijections between the sets of initial (resp. terminal) vertices in the corresponding graphs. And if $E$ and $F$ are graphs without initial (respectively terminal) $n$-looped vertices for $n\ge 1$, any homeomorphism between them induces a bijection between the sets of sources (respectively sinks) of $E$ and $F$. }
\end{remark}

Let $E$ be a graph, $u,v\in E^0$ and assume that there is an injective map 
$\theta\colon s^{-1}(u)\to s^{-1}(v)$  such that for each $r(f)=r(\theta(f))$ for any $f\in s^{-1}(u)$. Let $F=E(u\hookrightarrow v)$ be the \emph{shift graph} associated to $\theta$, that is, $F^0:=E^0$ and $F^1:=\{g\}\sqcup (E^1\setminus\hbox{Im}(\theta))$ with $g \notin E^1$, where $s_F(g)=v$, $r_F(g)=u$ and for any other arrow $f\in F^1$ we have $s_F(f)=s_E(f)$ and  $r_F(f)=r_E(f)$. For more information about the shift graph see \cite[Definition 2.1]{AALP}.
Define then the map $\varphi\colon E^0\to F^0$ given by $\varphi(v)=v$ for any
$v\in E^0$. 

\begin{theorem}\label{shiftcont} In the previous conditions $\varphi$ is continuous for the $\top$ topologies of $E^0$ and $F^0$.
\end{theorem}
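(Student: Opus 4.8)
The plan is to translate continuity into a comparison of the two connection closures and then to settle the resulting combinatorial problem by the single observation that the new edge $g$, together with the arrows out of $u$, reconstitutes every deleted edge. Since $\varphi$ is the identity on the common vertex set $E^0=F^0$, the preimage of a set is that same set, so $\varphi$ is continuous precisely when every $\top$-closed subset of $F^0$ is $\top$-closed in $E^0$. Writing $c_E$ and $c_F$ for the two Kuratowski closures, and recalling $c_E(A)=\{w\in E^0\mid w\ge_E A\}$ and likewise for $F$, a set $C$ with $c_F(C)=C$ will satisfy $c_E(C)=C$ as soon as $c_E(C)\subseteq c_F(C)$, because the reverse inclusion $C\subseteq c_E(C)$ is automatic. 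Hence it suffices to prove $c_E(A)\subseteq c_F(A)$ for every $A\subseteq E^0$.

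Unwinding the definition, this inclusion says exactly that $w\ge_E x$ implies $w\ge_F x$ for all $w,x\in E^0$: every connection present in $E$ must survive in $F$. The key identity is that the edge added to $F$, namely $g$ with $s_F(g)=v$ and $r_F(g)=u$, simulates each deleted edge. Indeed, the deleted edges are precisely those of $\mathrm{Im}(\theta)\subseteq s^{-1}(v)$; if $e=\theta(f)$ with $f\in s^{-1}(u)$, then $e$ runs from $v$ to $r(\theta(f))=r(f)$ in $E$, while in $F$ the length-two path $gf$ runs $v\xrightarrow{\,g\,}u\xrightarrow{\,f\,}r(f)$ and has the same source and range. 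Note that the arrow $f$ survives in $F^1$, since $\mathrm{Im}(\theta)\subseteq s^{-1}(v)$ is disjoint from $s^{-1}(u)$.

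With this identity in hand I would take an arbitrary path $p=e_1\cdots e_n$ in $E$ from $w$ to $x$ and build an $F$-path with the same endpoints edge by edge: every $e_i\notin\mathrm{Im}(\theta)$ already lies in $F^1$ with unchanged source and range and is retained, whereas every $e_i=\theta(f_i)\in\mathrm{Im}(\theta)$ is replaced by the block $g f_i$. Since each replacement preserves the source and range of the edge it supplants, the concatenation is a well-formed path in $F$ from $w$ to $x$, so $w\ge_F x$. This yields $c_E(A)\subseteq c_F(A)$ for all $A$, and therefore $\varphi$ is continuous.

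The only step that requires genuine care is checking that the rewriting yields a legal path in $F$, i.e. that each retained edge and each inserted arrow $f_i$ really belongs to $F^1$ and that consecutive blocks compose. Both points follow from the structural facts already isolated: $\mathrm{Im}(\theta)$ consists solely of edges emitted by $v$, so the arrows out of $u$ are never deleted, and the compatibility of ranges and sources is exactly the hypothesis $r(\theta(f))=r(f)$ together with $s_F(g)=v=s_E(\theta(f))$. I expect no deeper obstacle: the content is entirely the \lq\lq $gf$ simulates $\theta(f)$\rq\rq\ identity, and the fact that the inclusion is one-directional (a connection created in $F$ by passing through $g$ need not exist in $E$) is consistent with $\varphi$ being merely continuous rather than a homeomorphism.
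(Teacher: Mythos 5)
Your proposal is correct and follows essentially the same route as the paper: both reduce continuity of the identity map on vertices to showing that every $F$-closed set is $E$-closed, i.e.\ that $w\ge_E x$ implies $w\ge_F x$, and both settle this by observing that each deleted edge $\theta(f)\in\mathrm{Im}(\theta)$ is simulated in $F$ by the path $gf$ with the same source and range. Your write-up merely makes explicit the edge-by-edge path-rewriting and the survival of the arrows emitted by $u$, which the paper's proof leaves implicit (under the same tacit assumption that $u\neq v$, so that $s^{-1}(u)$ and $\mathrm{Im}(\theta)\subseteq s^{-1}(v)$ are disjoint).
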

\begin{proof} First we claim that the two following statements are equivalent:
\begin{itemize}
    \item[(a)] Every closed subset of $F^0$ is a closed subset of $E^0$.
    \item[(b)] For all $u \in E^0$ and for all $S$ closed subset of $F^0$, if $u \ge_{E} S$ then $u \ge_{F} S$.
\end{itemize}
Indeed, for proving $(a) \Rightarrow (b)$ let $u \in E^0$ and $S$ a closed subset of $F^0$. Since also $S$ is a closed subset of $E^0$ by hypothesis, we have that if $u \ge_{E} S$ then $u \in S$. And conversely for $(b) \Rightarrow (a)$, consider $S$ a closed subset of $F^0$. Now we know that  if $u \ge_{E} S$ then $u \ge_{F} S$. We check that $S$ is closed of $E^0$: if $v \ge_{E} S$ then $v \ge_{F} S$, implying that $v \in S$. Since  $v \ge_{E} S \Rightarrow v \in S$, we have that $S$ is closed of $E^0$.

In the next step, we prove the statement given in the theorem. We have to check that the set of closed subsets of $F^0$ is contained in the set of closed subsets of $E^0$. Let $S$ be a closed subset in $F^0$. To prove that $S$ is closed in the topology of $E^0$ it suffices to check that
$$\forall u\in F^0, ((\ u\ge_E S) \Rightarrow (u\ge_F S )).$$
We will prove something slightly stronger: that  if $u\ge_E v$, then $u\ge_F v$. Indeed, the unique arrows of $E$ that has been eliminated in $F$ are those in $\hbox{Im}(\theta)$. Assume 
$s^{-1}(u)=\{f_1,\ldots f_k\}$ and 
$s^{-1}(v)=\{h_1,\ldots h_k,\ldots h_n\}$ where $k\le n$ and $\hbox{Im}(\theta)=\{h_1,\ldots,h_k\}$. Also $r(f_i)=r(h_i)$ for $i=1,\ldots,k$.
Then 
$F^1=\{g\}\sqcup (E^1\setminus\{h_1,\ldots,h_k\})$ where $s(g)=v$ and $r(g)=u$ . However the elimination of the edges $h_1,\ldots h_k$ do not eliminate connections since $v$ connects in $F$ with $r(f_i)$ through the path $gf_i$ (for $i=1,\ldots,k$). 
\end{proof}

\begin{example}\rm Consider the graphs $E$ and $F$ given below. In $E$ the closed subsets of the $\top$ topology are $\vacio$, $\{u_1\}$, $\{u_1,u_2\}$ and $E^0$. On the other hand, in $F$ we have $\vacio$, $\{v_1\}$ and $F^0$.

\[
\xygraph{!{(0.25,0.5)}*+{E\colon}
!{<0cm,0cm>;<1cm,0cm>:<0cm,1cm>::}
!{(1,0)}*+{\bullet}="c"
!{(2,0)}*+{\bullet}="d"
!{(3,0)}*+{\bullet}="g"
!{(3,0.25)}*+{\hbox{\tiny $u_3$}}="v"
!{(2,0.25)}*+{\hbox{\tiny $u_2$}}
!{(1,0.25)}*+{\hbox{\tiny $u_1$}}
!{(1.5,-0.25)}*+{\hbox{\tiny $f_1$}}
!{(2.5,-0.25)}*+{\hbox{\tiny $f_2$}}
!{(4,0)}*+{\hbox{\tiny $f_3$}}
"c":"d"
"d":"g" 
"g" :@(ur,dr) "g"
}\hskip2cm
\xygraph{!{(0.25,0.5)}*+{F\colon}
!{<0cm,0cm>;<1cm,0cm>:<0cm,1cm>::}
!{(2,0)}*+{\bullet}="d"
!{(3,0)}*+{\bullet}="g"
!{(4,0)}*+{\bullet}="h"
!{(2,0.25)}*+{\hbox{\tiny $v_1$}}
!{(2.5,0.25)}*+{\hbox{\tiny $g_1$}}
!{(3,0.25)}*+{\hbox{\tiny $v_2$}}="v"
!{(3.5,0.5)}*+{\hbox{\tiny $g_2$}}
!{(4.2,0.25)}*+{\hbox{\tiny $v_3$}}
!{(3.5,-0.5)}*+{\hbox{\tiny $g_3$}}
"d":"g" 
"g" :@(ur,dr) "h"
"h" :@(ur,dr) "g"
}\]

\medskip

The map $\varphi\colon E^0\to F^0$ such that $u_i\mapsto v_i$ for $i=1,2,3$ is continuous but not a homeomorphism since the image of the closed subset $\{u_1,u_2\}$ of $E^0$ is not a closed subset of $F^0$. However the canonical extension $\varphi\colon E\to F$ such that $\varphi(f_i)=g_i$ and $\varphi(f_i^*)=g_i^*$ for $i=1,2,3$ induces an isomorphism of Leavitt path algebras from $L_K(E)$ to $L_K(F)$
(in fact a shift move).
\end{example}

Before finishing this section, we have to introduce some formalities about categories in the next subsection.

\subsection{Graph Categories}\label{cati}

\def\grph{\mathcal{Grph}}
\def\set{\mathcal{Set}}
\def\f{\mathfrak{F}}
This subsection arises from the need to define \lq\lq operators\rq\rq\ which can be applied to any graph and produce certain sets. So, for instance, the assignation $E\mapsto\h_E$ mapping any graph with its set of hereditary and saturated sets is an example. Also one can map any graph $E$ to its set of line-points: $E\mapsto \Pl(E)$. So we can think of $\Pl$ as an operator acting on the class of all graphs. To way to formalize these examples is by using functors, so: category theory. 

As usual, for a category $\mathcal C$, the notation $X\in\mathcal C$ means that $X$ is an object of the category. 
When defining functors among categories, usually we will define only the object function when the morphism one is clear.
Define by $\grph$ the category whose objects are the directed graphs and for $E,F\in\grph$, we define $\hom_{\tiny\grph}(E,F)$ as the set of all isomorphisms (if any) $E\to F$. 
Denote by $\set$ the category of sets.
We will have the occasion of dealing with functors $\grph\to\set$. For instance $\h\colon\grph\to\set$ such that $E\mapsto\h_E$ the set of all hereditary and saturated subsets of $E^0$. We will also use the functors 
$\mathcal{E}^0,\mathcal{E}^1\colon \grph\to\set$ such that $\mathcal{E}^0(E)=E^0$ and $\mathcal{E}^1(E)=E^1$. We also define 
functors $\Pl,\Pc,\Pec,\Pb\colon\grph\to \set$ by writing $\Pl(E):=\{\hbox{line-points of}\ E\}$,
$\Pc(E):=\{\hbox{vertices in cycles without exits of}\ E\}$,
$\Pec(E):=\{\hbox{vertices in extreme cycles of}\ E\}$, and 
$$\Pb(E):=\{\hbox{vertices of $E$ whose tree contains infinite bifurcations}\}.$$
\begin{definition}\rm
A functor $H\colon\grph\to\set$ is said to be a \emph{point functor} if $H(E)\subset E^0$ (in other words, if it is a subfunctor of $\mathcal{E}^0$ defined above).
\end{definition}
Denote by $\f(\grph,\set)$ the category whose objects are the functors $\grph\to\set$ and for $H,J\in\f(\grph,\set)$ a morphism from $H$ to $J$ in 
$\f(\grph,\set)$ is a natural transformation $\tau\colon H\to J$. Thus $\tau=(\tau_E)_{E\in\grph}$ where 
$\tau_E\colon H(E)\to J(E)$ for any graph $E$, and the squares

\begin{center}
\begin{tikzcd}
H(E) \arrow[r, "\tau_E"] \arrow[d, "H(\a)"']
& J(E) \arrow[d, "J(\a)"] \\
H(E') \arrow[r,  "\tau_{E'}"']
& J(E')
\end{tikzcd}
\end{center}

\noindent commute when $\a\in \hom_{\grph}(E,E')$. We also define the category $\f^\sharp(\grph,\set)$ as the full subcategory of $\f(\grph,\set)$ whose objects are the point functors. 
\begin{definition}\rm
A point functor $H\colon\grph\to\set$ is said to be \emph{hereditary} in case $H(E)$ is a hereditary subset of $E^0$ for any graph $E$. Similarly can we define \emph{hereditary saturated} point functors $\grph\to\set$.
\end{definition}

Given a point functor  $H\colon\grph\to\set$ we define its hereditary closure denoted 
$\overline{H}^h$ as the new point functor $\overline{H}^h\colon\grph\to\set$ given by  $\overline{H}^h(E):=\hbox{Hereditary closure of } H(E)$ in $E^0$.
Similarly can we define the hereditary and saturated closure of a point functor $H$ (which we will denote by $\overline{H}$). We have the usual relations $H\subset \overline{H}^h\subset \overline{H}$ in the sense of subfunctors.  We can think of point functors as if they were ordinary subsets of vertices in a graph. So given two point functors $H_1,H_2$ we can construct in a obvious way the boolean operations $H_1\cup H_2$, $H_1\cap H_2$, $H_1 \setminus H_2$.
In particular, if $H$ is a point functor, we can construct a new point functor 
${\mathcal E}^0 \setminus H\colon\grph\to\set$ given by 
$({\mathcal E}^0 \setminus H)(E):=E^0\setminus H(E)$.
\begin{definition}{\rm
A point functor $H\colon\grph\to\set$ is said to be {\em closed (respectively open)} if $H(E)$ is closed in the $\top$ topology of $E$ (respectively open). Also given $H$ we can define new functors 
$c(H)$, $\accentset{\circ}{H}$, $\ext(H)$, $\partial H\colon\grph\to\set$, 
given by $$c(H)(E):=c(H(E)),\ \accentset{\circ}{H}(E):=\accentset{\circ}{\aoverbrace[L1R]{H(E)}}, \ \ext(H)(E):=\ext(H(E)),\  (\partial H)(E):=\partial(H(E)).$$
These new functors may be referred to by their usual names: closure of $H$ denoted $c(H)$,
interior of $H$ denoted $\accentset{\circ}{H}$, exterior of $H$ denoted $\ext(H)$, and boundary of $H$ denoted $\partial H$.}
\end{definition}

Observe that $\ext(H)$ can be described in terms of the connection of vertex by 
\begin{equation}\label{babero}
\ext(H)(E)=\{v\in E^0 \ \vert \ v\not\ge H(E)\}.
\end{equation}
We will see later on that when the functor $H$ is hereditary and saturated then so is $\ext(H)$ (see Proposition \ref{here}). We also have the following:

\begin{definition}\label{hervor}\rm
Let $f\colon L_K(E)\to L_K(F)$ be a ring isomorphism. Given two point functors  $H_i\colon\grph\to\set$ ($i=1,2$),   we will say that $H_1$ is  \emph{$f$-related} to $H_2$ if and only if $f(I(H_1(E)))=I(H_2(F))$. We will say that a point functor $H$ is \emph{$f$-invariant} if and only if $H$ is $f$-related to itself, that is, 
$f(I(H(E)))=I(H(F))$. Finally, a point functor $H$ is said to be {\em invariant under isomorphism} if and only if is $f$-invariant for any isomorphism $f$.
\end{definition}

Note that for $A,B\in\H_E$ one has $I(A\cup B)=I(A)+I(B)$ (the idea of the proof is 
in \cite[Proposition 1.6]{CMMSS}). Also $I(A\cap B)=I(A)\cap I(B)$, the inclusion  $I(A\cap B)\subset I(A)\cap I(B)$ is straightforward and for the other $I(A)\cap I(B)=I(C)$ for a suitable $C\in\H_E$.
Then $C\subset A\cap B$ hence $I(A)\cap I(B)=I(C)\subset I(A\cap B)$.

\begin{proposition}\label{union}
Let $H_i\colon\grph\to\set$ ($i=1,2$) be hereditary and saturated point functors and let $f\colon L_K(E)\to L_K(F)$ be an isomorphism. If $H_1$ and $H_2$ are $f$-invariant, then $H_1\cup H_2$ and $H_1\cap H_2$ are $f$-invariant point functors. 
\end{proposition}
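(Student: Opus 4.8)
The plan is to reduce the statement about functors to the two fixed graphs $E$ and $F$ attached to the isomorphism $f$, and then to combine the two lattice identities recorded just before the proposition with the fact that a ring isomorphism respects sums and intersections of ideals. Since the boolean operations on point functors are defined pointwise, I have $(H_1\cup H_2)(E)=H_1(E)\cup H_2(E)$ and $(H_1\cap H_2)(E)=H_1(E)\cap H_2(E)$, and likewise over $F$; so proving $f$-invariance amounts to checking $f(I((H_1\cup H_2)(E)))=I((H_1\cup H_2)(F))$ and the analogous equality for $\cap$. The hypotheses that $H_1,H_2$ are hereditary and saturated guarantee $H_i(E)\in\H_E$ and $H_i(F)\in\H_F$, which is exactly what is needed to invoke the identities $I(A\cup B)=I(A)+I(B)$ and $I(A\cap B)=I(A)\cap I(B)$ at both ends (note that only each factor, not the union, needs to lie in $\H$, and $H_1(E)\cup H_2(E)$ is hereditary, so $I$ of it is defined).

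For the union I would run the chain
$$f(I(H_1(E)\cup H_2(E)))=f(I(H_1(E))+I(H_2(E)))=f(I(H_1(E)))+f(I(H_2(E)))=I(H_1(F))+I(H_2(F))=I(H_1(F)\cup H_2(F)),$$
where the first and last steps use $I(A\cup B)=I(A)+I(B)$, the middle step uses that $f$, being a ring homomorphism, sends the sum of two ideals to the sum of their images, and the penultimate step is the $f$-invariance of $H_1$ and of $H_2$. This gives $f(I((H_1\cup H_2)(E)))=I((H_1\cup H_2)(F))$, so $H_1\cup H_2$ is $f$-invariant.

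The intersection case is formally identical, replacing $+$ by $\cap$ and the first identity by $I(A\cap B)=I(A)\cap I(B)$:
$$f(I(H_1(E)\cap H_2(E)))=f(I(H_1(E))\cap I(H_2(E)))=f(I(H_1(E)))\cap f(I(H_2(E)))=I(H_1(F))\cap I(H_2(F))=I(H_1(F)\cap H_2(F)).$$
The only step that is not purely formal, and hence the one deserving attention, is $f(J\cap K)=f(J)\cap f(K)$ for ideals $J,K$ of $L_K(E)$: the inclusion $\subseteq$ holds for any homomorphism, while the reverse inclusion requires injectivity of $f$ — if $f(a)=f(b)$ with $a\in J$ and $b\in K$, then $a=b\in J\cap K$ — so here it is essential that $f$ is an isomorphism rather than merely a homomorphism. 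Everything else (that $H_1\cup H_2$ and $H_1\cap H_2$ are again point functors, being subfunctors of $\mathcal E^0$, and that the constructions are taken pointwise) is immediate from the definitions. Thus no genuine obstacle remains beyond correctly citing the two quoted lattice identities and using bijectivity of $f$ in the intersection computation.
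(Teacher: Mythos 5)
Your proof is correct and follows essentially the same route as the paper's: both reduce to the identities $I(A\cup B)=I(A)+I(B)$ and $I(A\cap B)=I(A)\cap I(B)$ stated just before the proposition, and then push $f$ through the sum and the intersection of ideals. Your added remark that bijectivity of $f$ is needed for $f(J\cap K)=f(J)\cap f(K)$ is a worthwhile precision the paper leaves implicit, but it does not change the argument.
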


\begin{proof}
For the union, first observe that $I(H_1(E) \cup H_2(E))= I(H_1(E))+I(H_2(E))$. So applying $f$ to both sides of the last equality and our hypothesis we have: $f(I(H_1(E) \cup H_2(E)))= f(I(H_1(E))+I(H_2(E)))= f(I(H_1(E)))+f(I(H_2(E))) = I(H_1(F)) + I(H_1(F)) = I(H_1(F) \cup H_2(F))$ as desired. Now for the intersection, take into account that $I(H_1(E) \cap H_2(E))= I(H_1(E)) \cap I(H_2(E))$ and repeating the same argument then $f(I(H_1(E) \cap H_2(E)))=  I(H_1(F) \cap H_2(F))$.
\end{proof}

It has been proved that certain ideals associated to remarkable hereditary and saturated subsets of vertices are invariant under isomorphism of Leavitt path algebras.
Among these ideals we have: 
\begin{itemize}
\item[(1)] the ideal generated by $\Pl$ the set of line points since $I(\Pl)$ is the socle of the Leavitt path algebra (\cite[Theorem 4.2]{AMMS1}); 
\item[(2)] the ideal generated by $\Pc$ the set of vertices in cycles with no exits (\cite[Theorem 6.11]{ABS}); 
\item[(3)] the ideal generated by $\Pec$ the set of vertices in extreme cycles (\cite[Corollary 5.10]{CGKS});
\item[(4)] the ideal generated by $P_{ppi}$ the set of vertices which generates the largest purely infinite ideal of the Leavitt path algebra (\cite[Corollary 4.14]{CGKS}) and
\item[(5)] the ideal generated by $P_{ex}$ the set of vertices which generates the largest exchange ideal of a Leavitt path algebra (\cite[Corollary 6.3]{CGKS}). 
\end{itemize}

The point functor $\Pb$ is not invariant: the ideal generated by  $\Pb(E)$ (vertices whose tree contains infinite bifurcations) is not preserved under isomorphism in general, as the following example shows.

\begin{example}\label{ohcac}
\rm Consider the graphs $E$ and $F$ in Figure \ref{lluvioso}:
\begin{figure}[ht]
    \centering
   \hbox{\hskip 3cm
\begin{tikzpicture}[->]
\draw[fill=black] (-0.1,0) circle (1pt);
\draw[fill=black] (1.1,0.85) circle (1pt);
\draw[fill=black] (1.1,-.5) circle (1pt);
\draw[fill=black] (2,-.8) circle (1pt);
\draw[fill=black] (2,-.1) circle (1pt);
\draw[fill=black] (2.6,0.2) circle (0.5pt);
\draw[fill=black] (2.8,0.2) circle (0.5pt);
\draw[fill=black] (3,0.2) circle (0.5pt);
\draw[fill=black] (2.6,-0.2) circle (0.5pt);
\draw[fill=black] (2.8,-0.2) circle (0.5pt);
\draw[fill=black] (3,-0.2) circle (0.5pt);
\draw[fill=black] (2.6,-0.6) circle (0.5pt);
\draw[fill=black] (2.8,-0.6) circle (0.5pt);
\draw[fill=black] (3,-0.6) circle (0.5pt);
\draw[fill=black] (2.6,-1) circle (0.5pt);
\draw[fill=black] (2.8,-1) circle (0.5pt);
\draw[fill=black] (3,-1) circle (0.5pt);

\node at (-1,0){$E$:};
\node at (-0.4,0) {\tiny $v$};
\node at (1.1,-0.2) {\tiny $w_1$};
\node at (2,-1.1) {\tiny $w_2$};
\node at (2,0.2) {\tiny $w_3$};
\node at (1.3,1){\tiny $u$};
\node at (0.4,0.6) {\tiny $f$};
\node at (0.4,-0.5) {\tiny $g$};
\draw[] (0,0.1) -> (1,0.8);
\draw[] (0,-0.1) -> (1,-.5);
\draw[ ] (1.2,-0.4)--(1.9,-0.1);
\draw[ ] (1.2,-0.6)--(1.9,-0.8);
\draw[ ] (2.1,-0.8)--(2.5,-0.6);
\draw[ ] (2.1,-0.8)--(2.5,-1);
\draw[ ] (2.1,-0.1)--(2.5,-0.2);
\draw[ ] (2.1,-0.1)--(2.5,0.2) ;
\end{tikzpicture}
\hskip 1cm
\begin{tikzpicture}[->]
\draw[fill=black] (-0.1,0) circle (1pt);
\draw[fill=black] (-0.1,1) circle (1pt);
\draw[fill=black] (1.1,1) circle (1pt);
\draw[fill=black] (1.1,-.5) circle (1pt);
\draw[fill=black] (2,-.8) circle (1pt);
\draw[fill=black] (2,-.1) circle (1pt);
\draw[fill=black] (2.6,0.2) circle (0.5pt);
\draw[fill=black] (2.8,0.2) circle (0.5pt);
\draw[fill=black] (3,0.2) circle (0.5pt);
\draw[fill=black] (2.6,-0.2) circle (0.5pt);
\draw[fill=black] (2.8,-0.2) circle (0.5pt);
\draw[fill=black] (3,-0.2) circle (0.5pt);
\draw[fill=black] (2.6,-0.6) circle (0.5pt);
\draw[fill=black] (2.8,-0.6) circle (0.5pt);
\draw[fill=black] (3,-0.6) circle (0.5pt);
\draw[fill=black] (2.6,-1) circle (0.5pt);
\draw[fill=black] (2.8,-1) circle (0.5pt);
\draw[fill=black] (3,-1) circle (0.5pt);

\node at (-1.3,0){$F$:};
\node at (-0.4,1) {\tiny $v_1$};
\node at (-0.4,0) {\tiny $v_2$};
\node at (1.1,-0.1) {\tiny $w_1'$};
\node at (2,-1.1) {\tiny $w_2'$};
\node at (2,0.2) {\tiny $w_3'$};
\node at (1.3,1.1){\tiny $u'$};
\node at (0.4,1.2) {\tiny $f'$};
\node at (0.4,-0.5) {\tiny $g'$};
\draw[] (0,1) -> (1,1);
\draw[] (0,-0.05) -> (1,-.5);
\draw[ ] (1.2,-0.4)--(1.9,-0.1);
\draw[ ] (1.2,-0.6)--(1.9,-0.8);
\draw[ ] (2.1,-0.8)--(2.5,-0.6);
\draw[ ] (2.1,-0.8)--(2.5,-1);
\draw[ ] (2.1,-0.1)--(2.5,-0.2);
\draw[ ] (2.1,-0.1)--(2.5,0.2) ;
\end{tikzpicture}
} \caption{}
    \label{lluvioso}
\end{figure}
\bigskip

We assume that  $T(w_1)=\{w_i\}_{i\ge 1}$ and each $w_i$ is a bifurcation with two edges for the graph $E$ and similarly for the graph $F$. Thus $\Pb(E)=\{v\}\cup\{w_i\}_{i\ge 1}$ and $I(\Pb(E))=L_K(E)$. On the other hand $\Pb(F)=\{v_2\}\cup\{w_i'\}_{i\ge 1}$ and the ideal $I(\Pb(F))$ is not $L_K(F)$. In fact $L_K(F)/I(\Pb(F))\cong M_2(K)$.
\end{example}

However we have: 
\begin{proposition} Let $E$ and $F$ be the graphs considered in the above Example \ref{ohcac}. There is a graded $*$-isomorphism of $K$-algebras $\theta\colon L_K(E)\to L_K(F)$ such that $\theta(v)=v_1+v_2$ and the image under $\theta$ of the other vertices and edges are the homonymous vertices and edges of $F$ (and the same applies to ghost edges).
\end{proposition}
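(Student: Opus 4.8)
The plan is to build $\theta$ directly on generators via the universal property of $L_K(E)$ and then upgrade it to an isomorphism using the Graded Uniqueness Theorem. The guiding observation is that in $L_K(E)$ the vertex $v$ is regular with $s^{-1}(v)=\{f,g\}$, so relation (CK2) gives $v=ff^*+gg^*$, and the summands $ff^*$, $gg^*$ are orthogonal idempotents. In $L_K(F)$ the vertices $v_1$ and $v_2$ emit respectively only $f'$ and $g'$, so $v_1=f'f'^*$ and $v_2=g'g'^*$. This makes $v_1,v_2$ the natural targets of $ff^*,gg^*$ and explains the prescription $\theta(v)=v_1+v_2$.

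First I would define a family in $L_K(F)$ indexed by the generators of $L_K(E)$: set $\theta(v)=v_1+v_2$ and send every other vertex, every edge and every ghost edge of $E$ to its homonymous element of $F$ (with $\theta(f)=f'$, $\theta(g)=g'$, and $\theta(e^*)=\theta(e)^*$ throughout). Then I would verify that these images satisfy the defining relations (V), (E1), (E2), (CK1), (CK2) of $L_K(E)$, so that the universal property yields a $K$-algebra $*$-homomorphism $\theta\colon L_K(E)\to L_K(F)$. The only relations needing genuine checking are those involving $v$: since $v_1,v_2$ are orthogonal idempotents, $v_1+v_2$ is idempotent and orthogonal to $u',w_i'$; since $s(f')=v_1$ and $s(g')=v_2$ one gets $\theta(v)\theta(f)=f'$ and $\theta(v)\theta(g)=g'$, while (CK1) holds because $f'^*g'=0$; and the crucial (CK2) at $v$ reads $\theta(f)\theta(f)^*+\theta(g)\theta(g)^*=f'f'^*+g'g'^*=v_1+v_2=\theta(v)$. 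All remaining relations are transported verbatim along the homonymous correspondence.

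Since $\theta$ carries vertices to degree-$0$ elements, edges to degree-$1$ elements and ghost edges to degree-$(-1)$ elements, it is a graded homomorphism. To conclude injectivity I would invoke the Graded Uniqueness Theorem (see \cite{AAS}), for which it suffices to check $\theta(v)\neq 0$ for every $v\in E^0$; this is immediate for the homonymous images and follows for $v$ from $v_1(v_1+v_2)=v_1\neq 0$. For surjectivity, note that $v_1=f'f'^*=\theta(ff^*)$ and $v_2=g'g'^*=\theta(gg^*)$ lie in the image, whence every generating vertex, edge and ghost edge of $L_K(F)$ is hit; thus $\theta$ is onto. Being a bijective graded $*$-homomorphism, $\theta$ is the desired graded $*$-isomorphism.

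The main (mild) obstacle is the bookkeeping in the relation checks at $v$: one must be careful that the orthogonal idempotents $v_1,v_2$ absorb $f'$ and $g'$ correctly and that (CK2) at $v$ matches the splitting $v_1+v_2$; everything else is routine transport along the identity on the common subgraph. As an alternative to the Graded Uniqueness Theorem, one could instead exhibit the inverse explicitly by $v_1\mapsto ff^*$, $v_2\mapsto gg^*$ and the homonymous rule on the rest, checking $\psi\theta=\mathrm{id}$ and $\theta\psi=\mathrm{id}$ on generators.
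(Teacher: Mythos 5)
Your proof is correct and follows essentially the same route as the paper: define $\theta$ on generators (the crucial check being (CK2) at $v$, where $\theta(f)\theta(f)^*+\theta(g)\theta(g)^*=v_1+v_2=\theta(v)$), obtain a $*$-homomorphism from the universal property, and get surjectivity from $v_1=\theta(ff^*)$, $v_2=\theta(gg^*)$. The only divergence is the injectivity step: you use the Graded Uniqueness Theorem (checking $\theta$ is graded and $\theta(v)\neq 0$ for all vertices), while the paper applies the Cuntz--Krieger Uniqueness Theorem using that $E$ is acyclic and hence satisfies Condition (L) vacuously --- both are equally valid, and yours has the small advantage of directly exploiting the gradedness you must establish anyway.
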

\begin{proof}
The existence of the isomorphism is based upon the \lq\lq out-split\rq\rq\ move (see \cite{AALP}). However we describe the construction of the isomorphism. 
We define first the linear map $\psi\colon K \hat E\to L_K(F)$ such that 
$\psi(v)=v_1+v_2$ and the image under $\psi$ of the other vertices and edges (real or ghost) are the homonymous vertices and edges of $F$ (as elements of $L_K(F)$). Also the image of a nontrivial path $x_1\cdots x_n$ in $K\hat E$ is defined to be $x_1'\cdots x_n'$.
Then we prove
that for $t\in\hbox{reg}(E)$, each difference $t-\sum hh^*$ (sum extended to edges $h$ with $s(h)=t$) maps to $0$ under $\psi$. This induces by passing to the quotient a homomorphism of $K$-algebras $\theta$ from $L_K(E)$ to $L_K(F)$. This homomorphism is an epimorphism since all the generators of $L_K(F)$ are in the image of $\theta$: for instance $v_1=\theta(ff^*)$ and $v_2=\theta(gg^*)$.
To see that $\theta$ is a monomorphism, observe that $E$ satisfies Condition (L) and we apply the Cuntz-Krieger Uniqueness theorem (see \cite[Theorem 2.2.16]{AAS}). The given isomorphism is actually a $*$-isomorphism by construction and it is also a graded isomorphism.
\end{proof}

\begin{remark}\rm  Note that according to \cite[Proposition 2.6]{CGKS}, $I(\Pec \cup \Pb)$ is invariant under any ring isomorphism. But this is not true in general because in that proof it is strongly used that the ideal $I(\Pec \cup \Pb)$ does not contain any primitive idempotents. For instance, in the graph below consisting of one \lq\lq fiber\rq\rq\ 

\hskip 7cm
\xygraph{
!{<0cm,0cm>;<1.5cm,0cm>:<0cm,1.2cm>::}
!{(0,0) }*+{\bullet_{u}}="a"
!{(1.5,0) }*+{\bullet_{v}}="b"
!{(0,1.5)}*+{}
"a":^{f_n}_{(\infty)}"b" 
}      
\vskip .5cm

\noindent there is a sink $v$ which is a primitive idempotent and it belongs to the ideal $I(\Pec \cup \Pb)$. Such primitive idempotents (belonging to $I(\Pb))$ may also be present in row-finite graphs (see the graph $E$ in Example \ref{ohcac}).
\end{remark}

In this work we deal with suitable sets of vertices which define invariant ideals. We will prove in a forthcoming section that for any isomorphism $f\colon L_K(E)\to L_K(F)$,
and for any $f$-invariant hereditary and saturated functor $H$, the exterior 
$\ext(H)$ is again $f$-invariant. However the other functors (interior, closure, etc.) are not necessarily $f$-invariant.

\begin{example}{\rm   The following example shows that $\partial H = c(H) \cap c(H^c)$ is not invariant via isomorphism. Consider the graphs given in \ref{ohcac}. In the graph $E$ take $H(E)=\Pl(E)=\{u\}$. We have that $c(H(E))=\{u,v\}$ and $c(H(E)^c)=\{v\}\cup \{ w_i\}_{i \geq 1}$ and so $\partial H(E)=\{v\}$. On the other hand, $H(F)=\Pl(F)=\{v_1, u'\}$ and $c(H(F))=\{v_1,u'\}$ and $c(H(F)^c)=\{v_2\} \cup \{w_i'\}_{i \geq 1}$. Finally $\partial H(F)=\vacio$.
}
\end{example}

\section{Annihilators}\label{anni}
For an arbitrary algebra $A$ (not necessarily associative) and an ideal $I\triangleleft A$, we can consider the {\it annihilator} $\ann(I):=\{a\in A\colon aI=Ia=0\}$. This is an ideal of $A$ and we have $I\subset\ann(\ann(I))$. Also it is easy to see that $\ann(\ann(\ann(I)))=\ann(I)$ for any ideal of $A$.
Let us denote $\Tilde I:=\ann(\ann(I))\supset I$ for any ideal $I$ of $A$.

Now, we consider the definition of regular ideal in the sense of \cite{Hamana}. These ideals are recently studied in \cite{DanielDanilo} in the context of Leavitt path algebras.

\begin{definition}\rm Let $A$ be a $K$-algebra,
an ideal $I\triangleleft A$ satisfying 
$\Tilde{I}=I$ is called {\it regular ideal}. 
\end{definition}
It is easy to see that the ideals of the form $I=\ann(J)$ (for another ideal $J$) are regular.

After writing Proposition \ref{here} below and Corollary \ref{qite}, we learn about the work \cite{DanielDanilo} whose Proposition 3.5 contains a similar result.

\begin{proposition}\label{here} Let $H \in \mathcal{H}_E$ and define $H'=\{v \in E^0 \; | \; v \not\geq H \}$. Then:
\begin{enumerate}
\item $H'$ is a hereditary and saturated subset of $E^0$, that is, $H' \in \mathcal{H}_E$.
\item ${\rm Ann}(I(H))= I(H')$.
\end{enumerate}
\end{proposition}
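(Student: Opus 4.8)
The plan is to prove the two assertions separately, obtaining (1) from elementary connectivity arguments and reducing the hard half of (2) to a density statement by passing to a quotient. Note first that $H'=\ext(H)$ by \eqref{babero}. For heredity, if $v\in H'$ and $v\ge w$, then $w\ge H$ would give $v\ge H$ by concatenating paths, contradicting $v\in H'$; hence $w\in H'$ (this uses nothing about $H$). For saturation, let $v\in\reg(E^0)$ with $r(s^{-1}(v))\subseteq H'$ and suppose, towards a contradiction, that $v\ge H$. If the connecting path is trivial, then $v\in H$, so by heredity every $r(e)$ with $s(e)=v$ lies in $H$, giving $r(s^{-1}(v))\subseteq H\cap H'=\vacio$; since $v$ is regular it is not a sink, so $s^{-1}(v)\ne\vacio$, a contradiction. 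If the path is nontrivial, its first edge $e_1\in s^{-1}(v)$ yields $r(e_1)\in r(s^{-1}(v))\subseteq H'$, while the remainder of the path witnesses $r(e_1)\ge H$, again a contradiction. Thus $v\not\ge H$, i.e. $v\in H'$, so $H'\in\mathcal H_E$.

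For the easy inclusion $I(H')\subseteq\ann(I(H))$ in (2), I would use the standard description $I(H)=\operatorname{span}_K\{\alpha\beta^*: r(\alpha)=r(\beta)\in H\}$ from \cite[Lemma 2.4.1]{AAS}. For $v\in H'$ and such a monomial $\alpha\beta^*$, the product $v\alpha$ is nonzero only if $s(\alpha)=v$, which would give $v\ge r(\alpha)\in H$, impossible; hence $v\alpha\beta^*=0$, and symmetrically $\alpha\beta^*v=0$ because $\beta^*v\ne 0$ forces $s(\beta)=v$. So every vertex of $H'$ annihilates $I(H)$, and as $\ann(I(H))$ is an ideal we get $I(H')\subseteq\ann(I(H))$.

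The substance of the proof is the reverse inclusion $\ann(I(H))\subseteq I(H')$, which I would obtain by passing to the quotient. Since $H'\in\mathcal H_E$, there is the canonical isomorphism $\pi\colon L_K(E)/I(H')\xrightarrow{\sim}L_K(F)$ with $F=E/H'$, whose vertex set is $F^0=E^0\setminus H'=c(H)=\{v:v\ge H\}$ and whose edges are those of $E$ with range in $F^0$. The image of $H$ is again hereditary in $F$, and it is dense there: any $E$-path from a vertex $v\in F^0$ to $H$ has all its intermediate vertices in $c(H)=F^0$ (each connects to $H$), so none of its edges is deleted and the path survives in $F$; hence $c_F(H)=F^0$. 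By Lemma \ref{romeo} applied to $F$, the ideal $I_F(H)$ is then dense in $L_K(F)$, and a dense ideal of the semiprime algebra $L_K(F)$ has zero annihilator. Finally, for $a\in\ann(I(H))$ one has $\pi(a)\,I_F(H)=\pi(a)\pi(I(H))=\pi(aI(H))=0$ together with its right-handed version, so $\pi(a)\in\ann_{L_K(F)}(I_F(H))=0$, whence $a\in\ker\pi=I(H')$. The main obstacle is exactly this last step: correctly identifying the quotient graph and checking that density of $H$ in $F$ forces a trivial annihilator; once density is established, semiprimeness of $L_K(F)$ closes the argument.
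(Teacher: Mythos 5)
Your proof is correct, and while part (1) and the inclusion $I(H')\subseteq\ann(I(H))$ coincide with the paper's argument (vertex-by-vertex annihilation of the spanning monomials $\alpha\beta^*$, plus your slightly more careful handling of the trivial-path case in the saturation proof), your proof of the hard inclusion $\ann(I(H))\subseteq I(H')$ is genuinely different. The paper stays inside graded ideal theory: it notes that $\ann(I(H))$ is a graded ideal, takes a homogeneous $z\in\ann(I(H))$, uses that $\ideal(z)=I(\ideal(z)\cap E^0)$ (graded ideals of row-finite Leavitt path algebras are generated by the vertices they contain), and checks directly that every vertex in $\ideal(z)$ lies in $H'$. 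You instead pass to the quotient: using part (1) and the canonical isomorphism $L_K(E)/I(H')\cong L_K(E/H')$ (which the paper itself invokes later, in Remark \ref{tita Antonia}), you observe that $H$ is dense in the $\top$ topology of the quotient graph $F=E/H'$ (every connecting path survives since its intermediate vertices all connect to $H$), apply Lemma \ref{romeo} to get that $I_F(H)$ is dense in $L_K(F)$, and then use semiprimeness of Leavitt path algebras to conclude that $\ann_{L_K(F)}(I_F(H))=0$, so that $\ann(I(H))$ maps to zero and hence sits inside $\ker\pi=I(H')$. Each route has its merits: the paper's is self-contained in the graded structure theory and needs neither the quotient-graph realization nor semiprimeness; yours is more in the topological spirit of the paper (it literally exhibits $\ann(I(H))$ as the kernel of the map onto the part of the graph that sees $H$, with density doing the work), but it does rest on two external facts you should cite — the quotient isomorphism \cite[Corollary 2.4.13]{AAS} (valid here since the paper assumes row-finiteness) and semiprimeness of $L_K(F)$, which is standard but stated without reference.
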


\begin{proof}
For the first part, take $v\in H'$ and assume $v\ge v'\in E^0$. If $v'\ge H$ then $v\ge H$ a contradiction. So $H'$ is hereditary. To prove that it is saturated consider a vertex $v$ such that $r(s^{-1}(v))\subset H'$. Let $\lambda\in\hbox{\rm path}(E)$ with $s(\lambda)=v$ and $r(\lambda)\in H$. Then 
writing $\lambda=f_1\cdots f_n$ we have $r(f_1)\in H'$ hence $r(f_1)\not\ge H$. But on the other hand $r(f_1)\ge r(\lambda)\in H$, a contradiction. This proves that any path whose source is $v$ has target out of $H$. Whence $v\not\ge H$ so that $v\in H'$. Let us prove now the second item. Take $u\in H'$ and let us check that $u I(H)=0$. If $z\in I(H)$ we can write $z=\sum_i k_i\a_i\b_i^*$ with $k_i \in K$ and $\a_i,\b_i$ paths whose range is in $H$. In case $uz\ne 0$ there must be some $i$ such that $u\a_i\b_i^*\ne 0$. Then  
$u\ge r(\a_i)\in H$, a contradiction. Hence $H'I(H)=0$ and applying the canonical involution $I(H)H'=0$. Consequently $H'\subset\ann(I(H))$ implying $I(H')\subset\ann(I(H))$.
Conversely, let $z\in \ann(I(H))$ be an homogeneous element. We will prove first that for any vertex $u$ such that $u\in \ideal(z)$ one has $u\in H'$.
Indeed: $\ideal(z)\subset \ann(I(H))$ whence $uI(H)=I(H)u=0$. If $u\ge H$ there is a path $\lambda$ with 
$s(\lambda)=u$ and $r(\lambda)\in H$. But then $\lambda=u\lambda\in u I(H)=0$ a contradiction. Thus $u\in H'$. So far we have $H_1:=E^0\cap \ideal(z)\subset H'$. So $I(H_1)\subset I(H')$.
Moreover, $\ideal(z)=I(H_1)$ so we deduce that
$\ideal(z)\subset I(H')$. But this is true for any homogeneous element $z\in \ann(I(H))$ hence for any element of $\ann(I(H))$. So $\ann(I(H))\subset I(H')$. \end{proof}

\begin{corollary}\label{qite} Let $H \in \h_E$ and $H'=\{v \in E^0 \; | \; v \not\geq H \}$. Define $H''=\{v \in E^0 \; | \; v \not\geq H' \}$.  Then:
\begin{enumerate}
\item $H'' \in \mathcal{H}_E$ and ${\rm Ann}({\rm Ann}(I(H)))=I(H'')$.
\item $H'' \subseteq \{v \in E^0 \; \vert \; v \geq H \}$.
\item $H \subset H''$.
\item $H'' \subset H$ if and only if for any $v\in E^0$ one has $v\not\ge w$ for every $w\not\ge H$ implies $v \in H$.

\end{enumerate}
\end{corollary}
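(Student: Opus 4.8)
The plan is to derive all four items from Proposition \ref{here} by applying it twice and then simply unwinding the definitions of $H'$ and $H''$; no genuinely new computation is required. For item (1), since $H\in\h_E$, Proposition \ref{here} gives $H'\in\h_E$ and $\ann(I(H))=I(H')$. As $H'$ is again hereditary and saturated, applying the same proposition to $H'$ (with $H''=(H')'$) yields $H''\in\h_E$ and $\ann(I(H'))=I(H'')$. Composing these two identities,
$$\ann(\ann(I(H)))=\ann(I(H'))=I(H''),$$
which is exactly item (1).

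For item (2), the one idea needed is that the trivial path gives $v\geq v$ for every vertex. If $v\in H''$, i.e. $v\not\geq H'$, then $v\notin H'$ (otherwise $v\geq v\in H'$ would give $v\geq H'$, a contradiction); since $H'=\{w:w\not\geq H\}$, the condition $v\notin H'$ means precisely $v\geq H$, so $H''\subseteq\{v\in E^0:v\geq H\}$. For item (3) I would argue directly from hereditariness: given $v\in H$, if $v\geq w$ for some $w\in H'$ then $w\not\geq H$, yet $v\in H$ together with $v\geq w$ forces $w\in H$, whence $w\geq w\in H$, i.e. $w\geq H$, a contradiction; thus $v\not\geq H'$, that is $v\in H''$, proving $H\subset H''$. (Alternatively, item (3) follows from the general inclusion $I\subseteq\ann(\ann(I))$ recalled at the start of this section, together with item (1) and the correspondence between hereditary saturated subsets and graded ideals.)

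Item (4) is then a tautological reformulation: by definition $v\in H''$ means $v\not\geq w$ for every $w\in H'=\{w:w\not\geq H\}$, so the bracketed hypothesis ``$v\not\geq w$ for every $w\not\geq H$'' is precisely the condition $v\in H''$, and consequently the displayed implication ``for any $v\in E^0$, ($v\not\geq w$ for every $w\not\geq H$) implies $v\in H$'' is literally the assertion $H''\subseteq H$; the claimed equivalence is therefore immediate. I expect no serious obstacle in this corollary—the only moment demanding a little care is item (2), where allowing the trivial path ($v\geq v$) is exactly what prevents any vertex of $H''$ from lying in $H'$; everything else is bookkeeping built on Proposition \ref{here}.
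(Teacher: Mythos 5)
Your proposal is correct and follows essentially the same route as the paper: item (1) by applying Proposition \ref{here} twice, and items (2)--(4) by unwinding the definitions of $H'$ and $H''$ together with hereditariness of $H$ (your contradiction argument for (3) is exactly the paper's, just written out in more detail). The extra observations you add --- the role of the trivial path in (2) and the alternative derivation of (3) from $I\subseteq\ann(\ann(I))$ --- are sound but not needed beyond what the paper already does.
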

\begin{proof}
The first item is straightforward from  Proposition \ref{here}. For the second if $v\in H''$, then $v \not \ge H'$ implies $v \notin H'$, so $v \ge H$. 
For proving (3),  if $v\in H$ and $v \not \in H''$ then $v \ge H'$ which is a contradiction. 
To prove (4) suppose first that $H'' \subset H$. Let $v$ be such that $v\not\ge w$ for every $w\not\ge H$. Then $v \not \ge H'$ which implies $v \in H''$. Because of the assumption $H'' \subset H$, we have $v \in H$. For the converse, consider $v \in H''$. So $v \not \ge H'$, that is, $v\not\ge w$ for every $w\not\ge H$. Then $v \in H$. \qedhere
\end{proof}

\begin{remark}\rm 
Observe that by  the Proposition \ref{here}  it is easy to check that an ideal $I(H)$ is regular if and only if $H''\subset H$ if and only if for any $v\in E^0$ one has 
$v\not\ge w$ for every $w\not\ge H$ implies $v \in H$.
\end{remark}

If $f\colon A\to B$ is a ring isomorphism we know that for any ideal
$I\triangleleft A$ one has $f(\ann(I))=\ann(f(I))$. Therefore for any $I\triangleleft A$ one has 
$f(\Tilde I)=\widetilde{f(I)}$, implying that $f$ transforms regular ideals into regular ideals. 

\begin{proposition}
If $f\colon L_K(E)\cong L_K(F)$ is a ring isomorphism and $H_1\in\h_E$, $H_2\in\h_F$, with 
$f(I(H_1))=I(H_2)$. Then (following the notation in Corollary \ref{qite}) we
have $f(I(H_1'))=I(H_2')$.
\end{proposition}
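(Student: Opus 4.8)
The plan is to reduce everything to the annihilator identity already established. The key observation is that, by Proposition~\ref{here}(2), the passage $H\mapsto H'$ is at the level of ideals precisely the annihilator operation: $\ann(I(H_1))=I(H_1')$ in $L_K(E)$ and $\ann(I(H_2))=I(H_2')$ in $L_K(F)$. Moreover, by Proposition~\ref{here}(1) both $H_1'\in\h_E$ and $H_2'\in\h_F$, so these are ideals of the expected form and the asserted equality $f(I(H_1'))=I(H_2')$ is meaningful.

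First I would invoke the general fact recalled in the paragraph immediately preceding the statement: for any ring isomorphism $f\colon A\to B$ and any ideal $I\triangleleft A$ one has $f(\ann(I))=\ann(f(I))$. This holds simply because $f$ is a bijective ring homomorphism, so it carries the two-sided annihilator of $I$ bijectively onto the two-sided annihilator of the image ideal $f(I)$.

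The result then follows by a direct chain of equalities:
$$f(I(H_1'))=f(\ann(I(H_1)))=\ann(f(I(H_1)))=\ann(I(H_2))=I(H_2'),$$
where the first and last equalities are Proposition~\ref{here}(2), the middle equality is the commutation of $f$ with annihilators, and the third uses the hypothesis $f(I(H_1))=I(H_2)$.

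I do not expect any genuine obstacle here; the substance of the argument lies entirely in Proposition~\ref{here}, which identifies $I(H')$ with $\ann(I(H))$. The only point worth verifying explicitly is that $H_1'$ and $H_2'$ are indeed hereditary and saturated, so that $I(H_1')$ and $I(H_2')$ refer to the same ideal construction used throughout the paper; this is exactly Proposition~\ref{here}(1). Hence the proposition should be regarded as a clean corollary of the annihilator description of $H'$ together with the fact that isomorphisms preserve annihilators.
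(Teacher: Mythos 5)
Your proof is correct and is essentially identical to the paper's own argument: both identify $I(H_i')$ with $\ann(I(H_i))$ via Proposition~\ref{here}(2) and then use the fact that a ring isomorphism commutes with annihilators to obtain the chain $f(I(H_1'))=f(\ann(I(H_1)))=\ann(f(I(H_1)))=\ann(I(H_2))=I(H_2')$. No gaps; your additional remark that $H_1',H_2'$ are hereditary and saturated (Proposition~\ref{here}(1)) is a harmless extra check the paper leaves implicit.
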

\begin{proof}
We know that $I(H_i')=\ann(I(H_i))$ for $i=1,2$. So $$f(I(H_1'))=f(\ann(I(H_1))=\ann(f(I(H_1)))=\ann(I(H_2))=I(H_2').$$
\end{proof}

The results in this section are re-stated in terms of point
functors in the proposition below. We highlight that our main interest is in invariant point functors so item \eqref{leo} is
an essential result for our purposes.

\begin{proposition}\label{dormit}
Let $H\colon\grph\to\set$ be a hereditary and saturated point functor. Then: 
\begin{enumerate}
\item $\ext(H)$ is a hereditary and saturated point functor. 
\item $\ann(I(H(E))=I(\ext(H)(E))$.
\item $\ext(\ext(H))$ is a hereditary and saturated point functor and $\ann(\ann(I(H(E)))= I(\ext(\ext(H(E)))$.
\item $\ext(\ext(H(E)) \subseteq \{v \in E^0 \; \vert \; v \geq H(E) \}$.
\item $H(E) \subset \ext(\ext(H(E))$.
\item $\ext(\ext(H(E)) \subset H(E)$ if and only if for any $v\in E^0$ one has $v\not\ge w$ for every $w\not\ge H(E)$ implies $v \in H(E)$.
\item\label{leo} If $H_i\colon\grph\to\set$ ($i=1,2$) are point functors and $H_1$ is $f$-related to $H_2$ then $\ext{(H_1)}$ is $f$-related to $\ext{(H_2)}$. In particular if a point functor $H$ is $f$-invariant, then also $\ext{(H)}$ is $f$-invariant.
\end{enumerate}
\end{proposition}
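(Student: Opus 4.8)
The plan is to recognise Proposition~\ref{dormit} as the functorial repackaging of Proposition~\ref{here}, Corollary~\ref{qite} and the elementary fact that annihilators commute with ring isomorphisms; almost all the work has already been done, so the proof amounts to checking that the translation is legitimate graph by graph. The starting observation is that, for every graph $E$, the description \eqref{babero} gives
\[\ext(H)(E)=\{v\in E^0\ \vert\ v\not\ge H(E)\}=H(E)',\]
with $H(E)'$ in the sense of Proposition~\ref{here}. Since $H$ is a hereditary and saturated point functor, $H(E)\in\h_E$, so Proposition~\ref{here}(1) applies and yields $\ext(H)(E)=H(E)'\in\h_E$ for every $E$. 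Functoriality of $\ext(H)$ is immediate because every morphism in $\grph$ is a graph isomorphism, and such isomorphisms preserve the connection relation $\ge$ and commute with $H$; hence $\ext(H)$ is a hereditary and saturated point functor, which is item~(1). Applying the same remark to $\ext(\ext(H))(E)=H(E)''$ and invoking Corollary~\ref{qite}(1) gives the hereditary and saturated part of item~(3).

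For the annihilator identities I would simply read off the pointwise statements. Item~(2) is Proposition~\ref{here}(2) written for each $E$, namely $\ann(I(H(E)))=I(H(E)')=I(\ext(H)(E))$; item~(3) is the corresponding consequence of Corollary~\ref{qite}(1). Items~(4), (5) and (6) are literal transcriptions of Corollary~\ref{qite}(2), (3) and (4) respectively, after substituting $\ext(\ext(H))(E)=H(E)''$, so nothing beyond the earlier statements is needed here.

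The only genuinely new content is item~(7), and this is where I would concentrate. Let $f\colon L_K(E)\to L_K(F)$ be the given ring isomorphism and suppose $H_1$ is $f$-related to $H_2$, i.e. $f(I(H_1(E)))=I(H_2(F))$. By item~(2) we have $I(\ext(H_1)(E))=\ann(I(H_1(E)))$ and $I(\ext(H_2)(F))=\ann(I(H_2(F)))$. Using that a ring isomorphism satisfies $f(\ann(I))=\ann(f(I))$ for every ideal $I$ (recorded earlier in this section), I would compute
\[f(I(\ext(H_1)(E)))=f(\ann(I(H_1(E))))=\ann(f(I(H_1(E))))=\ann(I(H_2(F)))=I(\ext(H_2)(F)),\]
which is exactly the assertion that $\ext(H_1)$ is $f$-related to $\ext(H_2)$. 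The final \lq\lq in particular\rq\rq\ is the special case $H_1=H_2=H$, giving $f$-invariance of $\ext(H)$ from $f$-invariance of $H$.

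There is essentially no hard step: the main point requiring care is the reduction in item~(7) to annihilators, which depends on having item~(2) in hand first and on the commutation $f(\ann(I))=\ann(f(I))$, valid for any ring isomorphism and therefore needing no graded or $*$-structure. The remaining mild subtlety is confirming that $\ext(H)$ really is a functor, and not merely a pointwise assignment, which as noted follows from $\grph$ having only isomorphisms as morphisms.
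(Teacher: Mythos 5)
Your proposal is correct and matches the paper's approach exactly: the paper gives no separate proof of Proposition~\ref{dormit}, stating only that it re-packages the results of the section, namely Proposition~\ref{here} for items (1)--(2), Corollary~\ref{qite} for items (3)--(6), and for item (7) precisely the computation $f(I(\ext(H_1)(E)))=f(\ann(I(H_1(E))))=\ann(f(I(H_1(E))))=\ann(I(H_2(F)))=I(\ext(H_2)(F))$ that the paper uses to prove the unnamed proposition just before it. Your additional remarks (the identification $\ext(H)(E)=H(E)'$ via \eqref{babero} and the functoriality check using that all morphisms in $\grph$ are isomorphisms) are sound and only make explicit what the paper leaves implicit.
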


Since the functors $\Pl$, $\Pc$ and $\Pec$ are invariant, then the functors $\ext(\Pl)$, $\ext(\Pc)$ and $\ext(\Pec)$ are also invariant. Furthermore, since 
$\Plce:=\Pl\cup\Pc\cup\Pec$ is invariant by Proposition \ref{union}, we have that $\ext(\Plce)$ is also invariant. Observe that $\ext(\Plce)$ is a subfunctor of $\Pb$. Concretely
$$\ext(\Plce)(E)=\{v\in E^0 \; \vert  \; v\not\ge \Plce\}.$$
\begin{definition}{\rm
For a graph $E$ we define the set of {\it vertices with pure infinite bifurcations}  $\Pbp(E):=\{v\in E^0 \; \vert \; v\not\ge\Plce\}$ and the point functor 
$\Pbp\colon\grph\to\set$ such that $E\mapsto \Pbp(E)$.}
\end{definition}
\begin{theorem}\label{enjundia}
Let $E$ be a graph, the functor $\Pbp\colon\grph{}\to \set$ is invariant.
\end{theorem}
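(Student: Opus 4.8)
The plan is to identify $\Pbp$ with a functor we already know to be invariant and then quote the exterior machinery just developed. Concretely, I would first observe that by definition
\[
\Pbp(E)=\{v\in E^0 \; \vert \; v\not\ge\Plce(E)\}=\ext(\Plce)(E),
\]
so as point functors we have the identity $\Pbp=\ext(\Plce)$, where $\Plce=\Pl\cup\Pc\cup\Pec$. This is precisely the observation recorded just before the statement, so the content of the theorem is that the right-hand side is invariant.

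Next I would assemble the invariance of $\Plce$ itself. Each of $\Pl$, $\Pc$, $\Pec$ is invariant under isomorphism (by \cite{AMMS1}, \cite{ABS}, \cite{CGKS} respectively, as listed in items (1)--(3) above), and each is a hereditary and saturated point functor. Since invariance for a point functor means $f$-invariance for every isomorphism $f$, I would fix an arbitrary isomorphism $f\colon L_K(E)\to L_K(F)$ and apply Proposition \ref{union} twice: $\Pl\cup\Pc$ is $f$-invariant, and then $(\Pl\cup\Pc)\cup\Pec=\Plce$ is $f$-invariant. As this holds for every $f$, the functor $\Plce$ is invariant. (One should note $\Plce$ is hereditary and saturated, being a finite union of such functors, so Proposition \ref{union} genuinely applies.)

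Finally I would invoke the exterior construction. By Proposition \ref{dormit}(\ref{leo}), if a hereditary and saturated point functor $H$ is $f$-invariant then so is $\ext(H)$; and again quantifying over all $f$, if $H$ is invariant then $\ext(H)$ is invariant. Taking $H=\Plce$, which is hereditary, saturated, and invariant by the previous paragraph, we conclude that $\ext(\Plce)=\Pbp$ is invariant, as claimed.

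I do not expect any genuine obstacle here: the theorem is essentially a corollary of Proposition \ref{dormit}(\ref{leo}) together with the closure of invariance under unions. The only points requiring care are bookkeeping ones — verifying that $\Plce$ is hereditary and saturated so that both Proposition \ref{union} and Proposition \ref{dormit} apply, and making sure invariance (a statement about \emph{all} isomorphisms) is correctly passed through the per-$f$ statements of those propositions. The substantive work has already been done upstream, namely establishing that $\Pl,\Pc,\Pec$ are invariant and proving the exterior-preserves-invariance result.
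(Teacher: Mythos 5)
Your proposal is correct and matches the paper's own proof exactly: the paper likewise notes $\Pbp=\ext(\Plce)$, derives the invariance of $\Plce=\Pl\cup\Pc\cup\Pec$ from Proposition \ref{union} and the known invariance of $\Pl$, $\Pc$, $\Pec$, and concludes via Proposition \ref{dormit}(\ref{leo}). Your added bookkeeping remarks (that $\Plce$ is hereditary and saturated, and that invariance quantifies over all isomorphisms $f$) are consistent with, and slightly more explicit than, the paper's one-line argument.
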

\begin{proof}
By Proposition \ref{dormit}\eqref{leo}, it suffices to realize that $\Pbp=\ext(\Plce)$ and $\Plce$ is invariant.
\end{proof}

For instance, in the graph $E$
 of Example \ref{ohcac}, we have
 $\Pl=\{u\}$, $\Pc=\vacio$, $\Pec= \vacio$ and $\Pbp=\{w_i\}_{i\ge 1}$
 hence the ideals $I(\{u\})$ and 
 $I(\{w_i\})$ are invariant under ring isomorphisms. 
 
 \section{Socle chain in Leavitt path algebras}\label{soclechain}

 If $R$ is a ring and $M$ an $R$-module, one can define the series of socles of $M$ in the usual way: it is an ascending chain of $R$-submodules  $\{\soc^{(i)}(M)\}_{i\ge 1}$  where $\soc^{(1)}(M):=\soc(M)$ and $\soc^{(n+1)}(M)/\soc^{(n)}(M)=\soc(M/\soc^{(n)}(M))$. In particular this can be applied to an algebra $A$ so that the socle series defined in \cite{ARS} is a sequence of ideals 
 $$\soc(A)\subset\cdots\subset\soc\nolimits^{(n)}(A)\subset\soc\nolimits^{(n+1)}(A)\subset\cdots \quad (n\in\N)$$
such that 
$$\frac{\soc^{(n+1)}(A)}{\soc^n (A)}=\soc\left(\frac{A}{\soc^n(A)}\right).$$
We will focus on $n\in\N$ to avoid dealing with infinite cardinals. One of our goals in this section is to check that the different ideals $\soc^{(n)}(A)$ associated to a Leavitt path algebra $A$ are invariant under isomorphism and to characterize them graphically. The other purpose is more ambitious: since the socle is the ideal generated by a point functor, namely $\Pl$, we
would like to prove that there is a ascending chain of point functors (starting at $\overline{\Pl}$) all of which are invariant. From this point, we want to extrapolate so that we can apply this circle of ideas to other point functors (for instance $\Pc$) and even further, for any invariant hereditary and saturated functor (next in Section \ref{rosa}).

Consider Example 2.7 of \cite{ARS}

 \begin{equation} 
    \xymatrix{ E: & {\bullet}^{v_{1,1}} \ar[r]  & {\bullet}^{v_{1,2}} \ar[r]  & {\bullet}^{v_{1,3}} \ar[r]  &
               {\bullet}^{v_{1,4}} \ar@{.>}[r] &  \\
             & {\bullet}^{v_{2,1}} \ar[r] \ar[u] & {\bullet}^{v_{2,2}} \ar[r] \ar[ul] & {\bullet}^{v_{2,3}} \ar[r] \ar[ull] & {\bullet}^{v_{2,4}}\ar@{.>}[r] \ar[ulll] &  \\
             & {\bullet}^{v_{3,1}} \ar[r] \ar[u] & {\bullet}^{v_{3,2}} \ar[r] \ar[ul] & {\bullet}^{v_{3,3}} \ar[r] \ar[ull] & {\bullet}^{v_{3,4}}\ar@{.>}[r] \ar[ulll] &  \\
           }\label{ahora}
           \end{equation}
\medskip

Let $A$ be the Leavitt path algebra $L_K(E)$ where $E$ is the graph in  \eqref{ahora}.  We have by  $\soc(A)=I(P_l)$, with $P_l=\{v_{1,i}\}_{i\ge 1}$ \cite[Theorem 5.2]{AMMS2}. Then  $\frac{A}{\socs(A)}\cong L_K(F)$ where $F$ is the graph in \eqref{ahora2}:

\begin{equation} 
    \xymatrix{ F: 
             & {\bullet}^{v_{2,1}} \ar[r]  & {\bullet}^{v_{2,2}} \ar[r] & {\bullet}^{v_{2,3}} \ar[r]  & {\bullet}^{v_{2,4}}\ar@{.>}[r]  &  \\
             & {\bullet}^{v_{3,1}} \ar[r] \ar[u] & {\bullet}^{v_{3,2}} \ar[r] \ar[ul] & {\bullet}^{v_{3,3}} \ar[r] \ar[ull] & {\bullet}^{v_{3,4}}\ar@{.>}[r] \ar[ulll] &  \\
           }\label{ahora2}
           \end{equation}
\medskip
Thus $\soc\left(\frac{A}{\socs(A)}\right)\cong\soc(L_K(F))$ and since
$\Pl(F)=\{ v_{2,i}\}_{i\ge 1}$, so we have $\soc^{(2)}(A)= I(H)$ being $H=\{v_{1,i}\}_{i\ge 1}\cup \{v_{2,i}\}_{i\ge 1}$. Let $B=\frac{A}{\socs^{(2)}(A)}$, then $B\cong L_K(G)$, being $G$ the graph:

\begin{equation} 
    \xymatrix{ G: 
             & {\bullet}^{v_{3,1}} \ar[r]  & {\bullet}^{v_{3,2}} \ar[r]  & {\bullet}^{v_{3,3}} \ar[r]  & {\bullet}^{v_{3,4}}\ar@{.>}[r]  &  \\
           }\label{ahora3}
           \end{equation}
\medskip
Since 
$$\frac{\soc^{(3)}(A)}{\soc^{(2)}(A)}=\soc\left(\frac{A}{\soc^{(2)}(A)}\right)=\soc(B)=B=\frac{A}{\soc^{(2)}(A)},$$ we have $\soc^{(3)}(A)=A=I(E^0)$.
In this example the series of socles is $\soc(A)\subsetneq\soc^{(2)}(A)\subsetneq\soc^{(3)}(A)=A$, inducing a series of hereditary and saturated subsets 
$$\{v_{1,i}\}_{i\ge 1}\subsetneq 
\{v_{1,i}\}_{i\ge 1}\cup \{v_{2,i}\}_{i\ge 1}\subsetneq E^0
$$
and each of the hereditary saturated subsets in this series induces an ideal invariant under isomorphisms. This example illustrates the general phenomenon that we analyze in the following paragraph.
\bigskip

If $A=L_K(E)$ is a Leavitt path algebra then each ideal $\soc^{(n)}(A)$ is graded by \cite[Theorem 3.2]{ARS}. Applying \cite[Theorem 2.4.8]{AAS} we get $\soc^{(n)}(A)=I(\Pl^{(n)}(E))$ for a certain hereditary and saturated subset named $\Pl^{(n)}(E)$. Also, since $\soc^{(n)}(A)\subset\soc^{(n+1)}(A)$ we have $$\Pl\nolimits^{(n)}(E)\subset \Pl\nolimits^{(n+1)}(E),$$
for any graph $E$. Looking at $\Pl^{(n)}$ as point functors
$\Pl^{(n)}\colon\grph\to\set$, we have a sequence of hereditary and saturated functors 
$$\overline{\Pl}=\Pl\nolimits^{(1)}\subset\Pl\nolimits^{(2)}\subset\cdots$$
It is easy to see that if $f\colon A\to B$ is a ring isomorphism, then $f(\soc^{(n)}(A))=\soc^{(n)}(B)$. Summarizing we derive the following proposition.

\begin{proposition}\label{invariante}
The series of functors $\Pl^{(n)}$, ($n\ge 1$) are invariant under isomorphism in the sense of definition \ref{hervor}.
\end{proposition}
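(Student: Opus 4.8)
The plan is to reduce the statement to the already-observed fact that ring isomorphisms preserve the socle series, and then translate this fact into the functorial language of Definition \ref{hervor}. The key structural input is the identity $\soc^{(n)}(L_K(E)) = I(\Pl^{(n)}(E))$, which holds because each $\soc^{(n)}(A)$ is a graded ideal (by \cite[Theorem 3.2]{ARS}) and graded ideals of a Leavitt path algebra are precisely the ideals generated by hereditary and saturated subsets of vertices (by \cite[Theorem 2.4.8]{AAS}); this is exactly how the hereditary saturated sets $\Pl^{(n)}(E)$ were defined in the paragraph preceding the proposition.

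First I would fix an arbitrary ring isomorphism $f\colon L_K(E)\to L_K(F)$ and recall that the socle of a ring is an invariant notion, so $f(\soc(L_K(E)))=\soc(L_K(F))$. The next step is to run an induction on $n$ to show $f(\soc^{(n)}(L_K(E)))=\soc^{(n)}(L_K(F))$ for all $n\ge 1$. The base case $n=1$ is the socle statement just recalled. For the inductive step I would use the defining recurrence $\soc^{(n+1)}(A)/\soc^{(n)}(A)=\soc(A/\soc^{(n)}(A))$: assuming $f$ carries $\soc^{(n)}(L_K(E))$ onto $\soc^{(n)}(L_K(F))$, $f$ descends to an isomorphism of the quotient rings $L_K(E)/\soc^{(n)}(L_K(E))\cong L_K(F)/\soc^{(n)}(L_K(F))$, and since this induced map again preserves socles, it matches $\soc(A/\soc^{(n)}(A))$ with its counterpart; pulling back along the quotient maps then yields $f(\soc^{(n+1)}(L_K(E)))=\soc^{(n+1)}(L_K(F))$.

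Finally I would translate the conclusion into the functorial invariance of Definition \ref{hervor}. Using the identity $\soc^{(n)}(L_K(E))=I(\Pl^{(n)}(E))$ and its analogue for $F$, the equality $f(\soc^{(n)}(L_K(E)))=\soc^{(n)}(L_K(F))$ reads $f(I(\Pl^{(n)}(E)))=I(\Pl^{(n)}(F))$, which is exactly the statement that the point functor $\Pl^{(n)}$ is $f$-invariant. Since $f$ was an arbitrary isomorphism, each $\Pl^{(n)}$ is invariant under isomorphism, as required.

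The main obstacle is the inductive step, specifically checking that a ring isomorphism of quotients preserves socles in a way compatible with the recurrence; the subtlety is purely bookkeeping (that $f$ genuinely descends to the quotient and that the induced map is again a ring isomorphism of Leavitt path algebras), since the invariance of the socle itself is standard. Everything else is a direct unwinding of definitions.
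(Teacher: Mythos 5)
Your proof is correct and follows essentially the same route as the paper: identify $\soc^{(n)}(L_K(E))=I(\Pl^{(n)}(E))$ via gradedness of the socle series and \cite[Theorem 2.4.8]{AAS}, then use that ring isomorphisms preserve the socle series. The only difference is that you spell out by induction the step the paper dismisses with ``it is easy to see that $f(\soc^{(n)}(A))=\soc^{(n)}(B)$,'' which is a welcome (and correct) elaboration rather than a departure.
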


\begin{remark}\label{tita Antonia} {\rm In general, for a row-finite graph $E$ and $H(E) \in \mathcal{H}_E$, let $\theta$ be the isomorphism given in \cite[Corollary 2.4.13 (i)]{AAS}, that is, $\theta: L_K(E)/I(H(E)) \rightarrow  L_K(E/H(E))$. Remember, under this situation, we have $\theta^{-1}$ defined as follows: for $v \in (E/H(E))^0$ and $e \in (E/H(E))^1$, $\theta^{-1}(v) = v + I(H(E))$, $\theta^{-1}(e) = e + I(H(E))$ and $\theta^{-1}(e^{\ast}) = e^{\ast} + I(H(E))$. For short we will identify (without mentioning) an element in $L_K(E)/I(H(E))$ with its corresponding image through $\theta$ inside $L_K(E/H(E))$.} 
\end{remark}

Given that the hereditary saturated functors $P_l^{(n)}$ induce invariant ideals, we now consider the problem of describing in purely graph-theoretic terms, the sets $\Pl^{(n)}(E)$, ($n\ge 1$).
Consider the following diagram where $i_E\colon E^0\to L_K(E)$ is the canonical injection and $\pi$ the canonical projection $\pi\colon L_K(E)\to L_K(E)/I(\Pl^{(n)}(E))\cong L_K(E/\Pl^{(n)}(E))$ (up to identification). The elements of $L_K(E)/I(\Pl^{(n)}(E))$ will be denoted $x+I(\Pl^{(n)}(E))$ as usual.
Denote $F:=E/\Pl^{(n)}(E)$.
We will need to take into account that 
$F^0=(E/\Pl^{(n)}(E))^0=\pi i(E^0)$ and $i_F=\pi i_E\vert_{F^0}$.
The commutativity of the square below 
is contained in the proof of \cite[Theorem 2.4.12]{AAS}.
\[
\begin{tikzcd}[column sep=small]
 E^0 \arrow{r}{i_E}  
  & L_K(E) \arrow{d}{\pi} \\
  F^0\arrow{r}{i_F}\arrow[u,hook]   & L_K(F)
\end{tikzcd}
\]

\begin{proposition}
\label{aislu}
Let $L_K(E)$ and $L_K(F)$ be the Leavitt path algebras associated to the graphs $E$ and $F=E/\Pl^{(n)}(E)$, then $\Pl^{(n+1)}(E)=\{ v \in E^0 \; | \; v + I(\Pl^{(n)}(E)) \in \overline{\Pl(F)}^F\}$. 
\end{proposition}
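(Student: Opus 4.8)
The plan is to reduce everything to the lattice isomorphism between graded ideals of a Leavitt path algebra and hereditary and saturated subsets of its graph, and then to transport the relevant ideal across the quotient isomorphism. Write $A=L_K(E)$ and recall that each $\soc^{(n)}(A)$ is graded by \cite[Theorem 3.2]{ARS}, so by \cite[Theorem 2.4.8]{AAS} it has the form $\soc^{(n)}(A)=I(\Pl^{(n)}(E))$ with $\Pl^{(n)}(E)=\{v\in E^0 \ \vert\ v\in\soc^{(n)}(A)\}\in\h_E$; the same applies with $n+1$ in place of $n$. Thus the proof reduces to deciding, for a vertex $v$, whether $v\in\soc^{(n+1)}(A)$, and the heart of the argument is to read this off inside $L_K(F)$, where $F=E/\Pl^{(n)}(E)$.

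First I would pass to the quotient. Since $\soc^{(n)}(A)\subset\soc^{(n+1)}(A)$, for $v\in E^0$ we have $v\in\soc^{(n+1)}(A)$ if and only if $\pi(v)=v+\soc^{(n)}(A)$ lies in $\soc^{(n+1)}(A)/\soc^{(n)}(A)$, where $\pi$ is the canonical projection; indeed $\pi^{-1}\big(\soc^{(n+1)}(A)/\soc^{(n)}(A)\big)=\soc^{(n+1)}(A)$. By the very definition of the socle series, $\soc^{(n+1)}(A)/\soc^{(n)}(A)=\soc\big(A/\soc^{(n)}(A)\big)$. Now apply the graded isomorphism $\theta\colon A/I(\Pl^{(n)}(E))\to L_K(F)$ of \cite[Corollary 2.4.13]{AAS}: it carries $\soc\big(A/\soc^{(n)}(A)\big)$ onto $\soc(L_K(F))$, and by \cite[Theorem 4.2]{AMMS1} the latter equals $I(\Pl(F))=I(\overline{\Pl(F)}^F)$, since the ideal generated by a set of vertices coincides with the one generated by its hereditary and saturated closure.

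Next I would make the identification of the coset of $v$ with a vertex of $F$ explicit, using the commutative square relating $i_E$, $i_F$ and $\pi$ together with Remark \ref{tita Antonia}: for $v\in F^0=E^0\setminus\Pl^{(n)}(E)$ one has $\theta(v+I(\Pl^{(n)}(E)))=v$ as a vertex of $F$. Finally, invoking the lattice correspondence \cite[Theorem 2.4.8]{AAS} once more inside $F$, a vertex $w\in F^0$ belongs to the graded ideal $I(\overline{\Pl(F)}^F)$ if and only if $w\in\overline{\Pl(F)}^F$. Chaining these equivalences yields, for $v\in F^0$, that $v\in\Pl^{(n+1)}(E)$ if and only if $v+I(\Pl^{(n)}(E))\in\overline{\Pl(F)}^F$; the remaining vertices, those of $\Pl^{(n)}(E)$, already lie in $\Pl^{(n+1)}(E)$ because the socle chain is increasing, and under the standing identification of Remark \ref{tita Antonia} their vanishing cosets are absorbed, so the stated set equality holds.

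The step I expect to be the main obstacle is the careful bookkeeping of this identification: one must check that $\theta$ sends the coset $v+I(\Pl^{(n)}(E))$ to the honest vertex $v$ of $F$ (and not merely to something equivalent to it), and that membership of that vertex in the graded ideal $\soc(L_K(F))$ is equivalent to membership in the vertex set $\overline{\Pl(F)}^F$. Both facts hinge on the commutativity of the $i_E$--$i_F$--$\pi$ square and on the ideals in play being graded; everything else is a routine chain of equivalences.
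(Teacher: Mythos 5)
Your proof is correct. Its forward half coincides with the paper's: from $v\in\Pl^{(n+1)}(E)$ both arguments pass to the coset lying in $\soc(L_K(F))=I(\Pl(F))$ and use the correspondence between graded ideals and hereditary saturated sets (\cite[Theorem 2.4.8]{AAS}) to conclude membership in $\overline{\Pl(F)}^F$. The genuine difference is in the converse inclusion. The paper proves it by induction on the levels $\Lambda^i$ of the closure $\overline{\Pl(F)}^F=\cup_{i\ge 0}\Lambda^i$: the base case $\Lambda^0=\Pl(F)$ is pulled back through $\theta$ and $\pi$ exactly as you do, but the inductive step works inside the graph $E$, applying the relation (CK2) at $v$ and checking separately that the edges at $v$ which disappear in $F$ have ranges in $\Pl^{(n)}(E)\subset\Pl^{(n+1)}(E)$. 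You bypass this induction entirely: since $I(\overline{\Pl(F)}^F)=I(\Pl(F))$, every vertex of $\overline{\Pl(F)}^F$ already lies in $\soc(L_K(F))$, and the whole closure is pulled back at once through $\theta^{-1}$ and $\pi^{-1}$, using that ring isomorphisms preserve socles and that $\pi^{-1}\bigl(\soc^{(n+1)}(L_K(E))/\soc^{(n)}(L_K(E))\bigr)=\soc^{(n+1)}(L_K(E))$. Your route is shorter and purely ideal-theoretic, resting on both directions of the lattice correspondence; the paper's route makes explicit how saturation in the quotient graph interacts with (CK2) in $E$, which is precisely the mechanism the authors reuse in Lemma \ref{oportuno} and Theorem \ref{encasa}, so each approach has its merits. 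One loose end you share with the paper: for $v\in\Pl^{(n)}(E)$ the coset $v+I(\Pl^{(n)}(E))$ is $0$, which is not a vertex of $F$, so the displayed set cannot literally contain $\Pl^{(n)}(E)$; the paper is silent on this, while your closing remark about vanishing cosets being ``absorbed'' at least names the convention needed for the stated equality to hold.
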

\begin{proof}
All-through this proof we will shorten the notation
$\overline{\Pl(F)}^F$ to $\overline{\Pl(F)}$.
We know $\soc^{(n)}(L_K(E))=I(\Pl^{(n)}(E))$ and $$\soc(L_K(F))=\soc\left(\frac{L_K(E)}{\soc^{(n)}(L_K(E))}\right)= \frac{\soc^{(n+1)}(L_K(E))}{\soc^{(n)}(L_K(E))}.$$ For the first containment, consider $v \in \Pl^{(n+1)}(E)$. Thus $v + I(\Pl^{(n)}(E)) \in \soc(L_K(F))$ which implies $v+ I(\Pl^{(n)}(E)) \in I(\Pl(F)) \cap F^0$. Then $v + I(\Pl^{(n)}(E)) \in \overline{\Pl(F)}$.
For the converse, let $v$ be such that $v + I(\Pl^{(n)}(E)) \in \overline{\Pl(F)}=\cup_{i\ge 0}\Lambda^i$ (see \eqref{onion}).
Recall that $\Lambda^0=\Pl(F)$.
We prove that for any $i$, one has 
$$v+I(\Pl\nolimits^{(n)}(E))\in \Lambda^i\implies v\in \Pl\nolimits^{(n+1)}(E).$$
For $i=0$ we need to prove that if
$v+I(\Pl^{(n)}(E))\in \Pl(F)$ then $v\in \Pl^{(n+1)}(E)$. Take  $v + I(\Pl^{(n)}(E)) \in \soc(L_K(F))$. Then $v \in \soc^{(n+1)}(L_K(E)) \cap E^0 = I(\Pl^{(n+1)}(E))\cap E^0$, that is, $v\in \Pl^{(n+1)}(E)$.
Assume now that for some $i$ we have the implication:
$$v+I(\Pl\nolimits^{(n)}(E))\in \Lambda^i\implies v\in \Pl\nolimits^{(n+1)}(E).$$
Now we prove that 
$$v+I(\Pl\nolimits^{(n)}(E))\in \Lambda^{i+1}\implies v\in \Pl\nolimits^{(n+1)}(E).$$
So we consider $v+I(\Pl^{(n)}(E))\in\Lambda^{i+1}\setminus\Lambda^i$. We know that $v$ is a regular vertex and since we are considering the row-finite case, we have $r_F(s_F^{-1}(v))=\{w_1,\ldots,w_n\}$. 
Since $w_j+I(\Pl\nolimits^{(n)}(E))\in\Lambda^i$ applying the induction hypothesis we have that each $w_j\in\Pl^{(n+1)}(E)$
for $1\le j\le n$. On the other hand, we may have
$r_E({s_E}^{-1}(v))=\{w_1,\ldots,w_n,w_{n+1},\ldots w_k\}$. But then, for $j\ge 1$, one has $w_{n+j}\in \Pl^{(n)}(E)$ hence these elements are in $\Pl^{n+1}(E)$. 
In conclusion $w_j\in\Pl^{(n+1)}(E)$ for any index $j$.
So the CK2 applied to the vertex $v$ of $E$ gives $v\in\Pl^{(n+1)}(E)$.
\end{proof}

\begin{notation}{\rm
For two subsets $E_1^0, E_2^0$ of vertices of $E^0$ we write $E_1^0 \subseteq^{1} E_2^0$ if all the vertices of $E_1^0$ are contained in $E_2^0$ except at most one.}
\end{notation}

For the next result we will need to do a previous lemma.
\begin{lemma}\label{oportuno}
Let $L_K(E)$ be the Leavitt path algebra associated to a graph $E$. Then $\Pl^{(n)}(E)$ does not contain vertices that are base of a cycle in $E$.
\end{lemma}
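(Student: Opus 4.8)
The plan is to argue by induction on $n$, the crucial base case being the statement for $n=1$, i.e. that the hereditary and saturated closure $\overline{\Pl(E)}$ of the line points of an \emph{arbitrary} graph contains no vertex that is the base of a cycle. I would prove this using the explicit description $\overline{\Pl(E)}=\bigcup_{m\ge 0}\Lambda^m$ from \eqref{onion}, with $\Lambda^0=T(\Pl(E))$. First I would record that every $\Lambda^m$ is hereditary, by an easy induction on $m$: if $v\in\Lambda^{m+1}\setminus\Lambda^m$ is regular with $r(s^{-1}(v))\subset\Lambda^m$ and $v\ge w$, then the first edge of a path from $v$ to $w$ has range in $\Lambda^m$, which is hereditary by the inductive hypothesis, forcing $w\in\Lambda^m$. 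Since a line point has no cycle at any vertex of its tree, $\Lambda^0=T(\Pl(E))$ contains no cycle base.

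Then, for the base case, I would suppose for contradiction that some cycle base $v$ lies in $\overline{\Pl(E)}$ and choose the minimal $m$ with $v\in\Lambda^m$. The case $m=0$ is impossible because $\Lambda^0$ contains no cycle base. For $m\ge 1$ we have $v\in\Lambda^m\setminus\Lambda^{m-1}$, so $v$ is regular and $r(s^{-1}(v))\subset\Lambda^{m-1}$; picking the first edge $e_1$ of the cycle based at $v$ gives $r(e_1)\in\Lambda^{m-1}$, and since $r(e_1)\ge v$ (travelling around the rest of the cycle) heredity of $\Lambda^{m-1}$ yields $v\in\Lambda^{m-1}$, contradicting minimality.

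For the inductive step I would combine Proposition \ref{aislu} with the behaviour of cycles under the quotient construction. Assume $\Pl^{(n)}(E)$ contains no cycle base and let $v$ be the base of a cycle $c$ in $E$. By the inductive hypothesis $v\notin\Pl^{(n)}(E)=:H$; moreover every vertex $w$ of $c$ satisfies $w\ge v$, so by heredity of $H$ none of them lies in $H$ (otherwise $v\in H$), and consequently every edge of $c$ has range outside $H$. Hence $c$ survives in the quotient graph $F=E/\Pl^{(n)}(E)$ and $v$ is again the base of a cycle there. Applying the already-established base case to the graph $F$, the vertex $v$ cannot lie in $\overline{\Pl(F)}^F$; by Proposition \ref{aislu} this says exactly that $v\notin\Pl^{(n+1)}(E)$, completing the induction.

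The main obstacle I anticipate is bookkeeping rather than conceptual: one must be careful that the base case is proved for an \emph{arbitrary} graph, so that it can legitimately be reused on the quotient $F$, and that the identification of $v+I(\Pl^{(n)}(E))$ with the vertex $v$ of $F$ in Proposition \ref{aislu} is valid, which is precisely guaranteed by the inductive hypothesis $v\notin\Pl^{(n)}(E)$. The heredity of the intermediate sets $\Lambda^m$ is the only genuinely technical point underlying the base case, and it is standard.
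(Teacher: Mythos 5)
Your proof is correct, but its logical organization differs from the paper's in a way worth noting. Both arguments proceed by induction on $n$ and both hinge on Proposition \ref{aislu} together with the filtration $\overline{\Pl(\,\cdot\,)}=\bigcup_{i\ge 0}\Lambda^i$ of \eqref{onion}; the difference lies in the inductive step. The paper assumes $w\in\Pl^{(n)}(E)$ is a cycle base, places $w$ in the filtration of $\overline{\Pl(F)}^F$ for $F=E/\Pl^{(n-1)}(E)$, and runs a secondary induction on the filtration level with a case split according to whether the cycle survives in $F$: if it survives, $w$ drops to a lower level of the filtration (contradiction); if not, some vertex of the cycle lies in $\Pl^{(n-1)}(E)$, hence by heredity $w$ does too, contradicting the inductive hypothesis. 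You instead observe that, by the inductive hypothesis plus heredity of $\Pl^{(n)}(E)$, no vertex of the cycle can lie in $\Pl^{(n)}(E)$, so the cycle survives intact in the quotient graph $F=E/\Pl^{(n)}(E)$; then you apply the $n=1$ case --- proved for \emph{arbitrary} graphs --- to $F$ itself, and read off $v\notin\Pl^{(n+1)}(E)$ from Proposition \ref{aislu}. This buys a cleaner inductive step: the $\Lambda$-filtration analysis is needed exactly once, in the base case, rather than being re-run inside every inductive step as in the paper. The price is that you must prove the base case in full (heredity of each $\Lambda^m$, then the minimal-level argument), which the paper dismisses as clear; your treatment of it is correct and in fact supplies the detail the paper omits.
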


\begin{proof}
 By induction on $n$, first it is clear for $n=1$ since $\Pl^{(1)}(E)=\overline{\Pl(E)}$. Suppose the condition holds for $\Pl^{(k)}(E)$ for $k < n$. Let $w \in \Pl^{(n)}(E)$ be such that it is a base of a cycle $c$ in $E$. By Proposition \ref{aislu}, we have that $w + I(\Pl^{(n-1)}(E)) \in \overline{\Pl(F)}^F$, where $F=E/\Pl^{(n-1)}(E)$. Write $\overline{\Pl(F)}^F=\cup_{i\ge 0}\Lambda^i$. If $w + I(\Pl^{(n-1)}(E)) \in \Lambda^0=\Pl(F)$ then $w \in \Pl^{(n-1)}(E)$ (because $c^0 \cap \Pl^{(n-1)}(E) \ne \emptyset$), but by induction hypothesis, $\Pl^{(n-1)}(E)$ does not contain vertices based at cycles so we get a contradiction. Next, we assume that $w \in \Lambda^i \setminus \Lambda^{i-1}$. So $r_F(s_F^{-1}(w)) \subseteq \Lambda^{i-1}$. We have two cases. First, imagine the cycle $c$ based at $w$ is such that $c \in  \path (F)$, then $w \in \Lambda^{i-1}$ which is not possible. So secondly, $c \notin \path (F)$, that is, there exists $u \in c^0$ with $u \in \Pl^{(n-1)}(E)$ hence $w \in \Pl^{(n-1)}(E)$, a contradiction.
\end{proof}

\begin{theorem}\label{encasa}
Let $L_K(E)$ be the Leavitt path algebra associated to a graph $E$. Then $\Pl^{(n+1)}(E)$ is the saturated closure of
\begin{equation}\label{chungon}
\{ v \in E^0 \; | \; T_E(v) \text{ is acyclic and }\forall w\in T_E(v), r_E(s_E^{-1}(w)) \subseteq^1 \Pl\nolimits^{(n)}(E)\}
\end{equation}
for $n  \ge 1$.
\end{theorem}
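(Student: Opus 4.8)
Write $H:=\Pl^{(n)}(E)$ and $F:=E/H$, and identify $F^0$ with $E^0\setminus H$ as in Proposition \ref{aislu}; let $S$ denote the set displayed in \eqref{chungon}. By Proposition \ref{aislu} a vertex $v\notin H$ belongs to $\Pl^{(n+1)}(E)$ exactly when $v+I(H)\in\overline{\Pl(F)}^F$, and since $H\subseteq\Pl^{(n+1)}(E)$ this amounts to $\Pl^{(n+1)}(E)=H\cup\overline{\Pl(F)}^F$. The statement to be proved therefore reduces to the identity $\overline{S}=H\cup\overline{\Pl(F)}^F$ (here $\overline{S}$ is the saturated closure of $S$, which coincides with its hereditary saturated closure once we check that $S$ is hereditary). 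The plan is to establish it in two stages: first that $S=H\cup\Pl(F)$, and then that saturating $S$ inside $E$ reproduces the hereditary saturated closure of $\Pl(F)$ computed inside $F$.

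First I would identify $S$. Because $H$ is hereditary, a path of $E$ issuing from a vertex outside $H$ remains outside $H$ until it possibly enters $H$, after which it can never leave; hence $T_F(v)=T_E(v)\setminus H$, and any cycle met in $T_E(v)$ must avoid $H$ altogether. This last fact is exactly where Lemma \ref{oportuno} is used: every vertex of a cycle is the base of a rotation of that cycle, so by the lemma no cycle vertex can lie in $H$; consequently a cycle in $T_E(v)$ is already a cycle in $T_F(v)$, and $T_E(v)$ is acyclic if and only if $T_F(v)$ is. For the bifurcation clause, the edges out of a vertex $w$ surviving in $F$ are precisely those whose range avoids $H$, so $w$ carries no bifurcation in $F$ exactly when $r_E(s_E^{-1}(w))\subseteq^1 H$ (the single exceptional range being the one surviving edge; parallel edges to that range require the obvious separate check). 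Since for $w\in H$ the condition $r_E(s_E^{-1}(w))\subseteq^1 H$ holds automatically by heredity, combining these observations shows that a vertex $v\notin H$ satisfies \eqref{chungon} iff $v$ is a line point of $F$, while every $v\in H$ satisfies \eqref{chungon}. Thus $S=H\cup\Pl(F)$; in particular $S$ is hereditary, so $\overline{S}=\bigcup_i\Lambda_E^i$ is computed by \eqref{onion} starting from $\Lambda_E^0=S$.

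Next I would saturate. Writing $\Lambda_E^i$ for the sets of \eqref{onion} in $E$ applied to $S$ and $\Lambda_F^i$ for those in $F$ applied to $\Pl(F)$, the goal is the inductive identity $\Lambda_E^i=H\cup\Lambda_F^i$, from which $\overline{S}=\bigcup_i\Lambda_E^i=H\cup\bigcup_i\Lambda_F^i=H\cup\overline{\Pl(F)}^F=\Pl^{(n+1)}(E)$ follows at once. The base case $i=0$ is the previous stage. For the inductive step the one substantive point is the matching of the regular-vertex clauses: if $v\notin H$ is regular in $E$, then saturation of $H$ forbids all edges of $v$ from ranging inside $H$, so $v$ is not a sink of $F$ and hence, by row-finiteness, is regular in $F$; and as $r_F(s_F^{-1}(v))=r_E(s_E^{-1}(v))\setminus H$, the inclusion $r_E(s_E^{-1}(v))\subseteq H\cup\Lambda_F^i$ is equivalent to $r_F(s_F^{-1}(v))\subseteq\Lambda_F^i$. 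These two equivalences yield both inclusions of $\Lambda_E^{i+1}=H\cup\Lambda_F^{i+1}$.

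The heart of the argument, and the step I expect to be most delicate, is this interplay between saturation and the quotient: a vertex may be regular in $E$ yet degenerate into a sink of $F$, which would derail the inductive bookkeeping were it not excluded precisely by the saturation of $H=\Pl^{(n)}(E)$. The two further points demanding care are the passage in the first stage between the vertex-level condition $\subseteq^1$ and the edge-level absence of bifurcations defining line points (that is, the treatment of parallel edges), and the use of Lemma \ref{oportuno} to transport acyclicity from $F$ back to $E$, without which $T_E(v)$ might secretly contain a cycle buried in $H$.
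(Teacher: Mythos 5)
Your proposal follows essentially the same route as the paper's proof: both rest on Proposition \ref{aislu} to translate membership in $\Pl\nolimits^{(n+1)}(E)$ into membership in $\overline{\Pl(F)}^F$ for $F=E/\Pl\nolimits^{(n)}(E)$, on Lemma \ref{oportuno} to transfer acyclicity between $E$ and $F$, on identifying the condition \eqref{chungon} with being a line point of $F$ (modulo $H=\Pl\nolimits^{(n)}(E)$), and on an induction along the levels $\Lambda^i$ of \eqref{onion}. Where you organize things differently is the saturation step: the paper's forward inclusion routes each vertex of $\Lambda^{i+1}$ through the CK2 relation and the ideal-theoretic fact \cite[Corollary 2.4.16 (i)]{AAS} to conclude that it lies in the saturated closure of \eqref{chungon}, whereas your level-by-level identity $\Lambda_E^i=H\cup\Lambda_F^i$ stays entirely at the level of graphs, yields both inclusions simultaneously, and isolates the one place where saturation of $H$ is genuinely needed (a regular vertex of $E$ outside $H$ cannot degenerate into a sink of $F$). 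That is a cleaner and more self-contained bookkeeping of the same argument, and the inductive step you give for it is correct.

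The one real weakness is the parenthetical in your Stage 1: you say that parallel edges ``require the obvious separate check,'' but no such check exists, because the claim $S=H\cup\Pl(F)$ is actually false in the presence of parallel edges. If $w$ emits two parallel edges to a single vertex $u\notin H$, then $r_E(s_E^{-1}(w))\subseteq^1 H$ holds while $w$ has a bifurcation in $F$, and the discrepancy is not always repaired by saturation: for the graph with vertices $u_0,u_1,u_2,\dots$ and two parallel edges from each $u_i$ to $u_{i+1}$, one has $\Pl\nolimits^{(1)}(E)=\emptyset$, so the set \eqref{chungon} for $n=1$ is all of $E^0$, while $\Pl\nolimits^{(2)}(E)=\emptyset$. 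Thus the vertex-level condition $\subseteq^1$ and the edge-level absence of bifurcations in $F$ are genuinely different notions. To be clear, you should not be judged more harshly than the paper on this point: the paper's own converse inclusion makes exactly the same leap (``Then we have in $F$ that $s^{-1}(w)=\emptyset$ or $\vert s^{-1}(w)\vert=1$''), and the statement is correct as written only for graphs without parallel edges, or with $\subseteq^1$ reinterpreted at the level of edges rather than ranges. But your write-up inherits this gap rather than closing it, and labelling the missing argument ``obvious'' is the one place where you claim more than you have established.
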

\begin{proof} It is straightforward to check that (\ref{chungon}) is a hereditary set. We denote $F:=E/\Pl^{(n)}(E)$. Consider $ v \in \Pl^{(n+1)}(E)$. Identify the vertices of $F$ with the corresponding vertices of $E$, i.e. $ v + I(\Pl^{(n)}(E))$ as a vertex of $F$ is identified with the vertex $v$ of $E$. Notice that $T_E(v)$ is acyclic by Lemma \ref{oportuno}. 

Next we prove that for any $w\in T_E(v)$, $r_E(s_E^{-1}(w))\subseteq^{1} \Pl^{(n)}(E)$. By Proposition \ref{aislu}, we have $v\in\overline{\Pl(F)}^F=\cup_{i\ge 0}\Lambda_i$. 
In case $v\in\Lambda^0=\Pl(F)$,
then $T_F(v)$ does not contain bifurcations of $F$.
So, take $w\in T_E(v)$, if $s^{-1}(w)=\{g_i\}_{i\in I}$ in $E$, the situation in $F$ is that either all the edges have disappeared when passing to $F$ or at most one, say $g_1$ survives. In this way, in the graph $F$ we have $\vert r_F(s_F^{-1}(w))\vert\le 1$. Whence $w$ is either a sink of $F$ or $r_F(s_F^{-1}(w))$ has cardinal $1$. This proves our claim for $i=0$. Now, assume that the property holds for any $\Lambda^k$ with $k \in\{0,1, \ldots, i\}$. Take $v\in\Lambda^{i+1}\setminus\Lambda^i$. Let
$r_F(s_F^{-1}(v))=\{w_1,\ldots,w_n\}$. Since $T_E(w_j)\subset T_E(v)$ then the tree of each $T_E(w_j)$ is acyclic, and since
$w_j\in \Lambda^i$, any vertex $v'\in T_E(w_j)$ satisfies 
 $r_E(s_E^{-1}(v'))\subseteq^{1} \Pl^{(n)}(E)$. Thus each $w_i$ for $i=1,\ldots,n$ is in the set \eqref{chungon}. We may have
$r_E({s_E}^{-1}(v))=\{w_1,\ldots,w_n,w_{n+1},\ldots w_q\}$. But then, for $j\geq 1$, one has $w_{n+j}\in \Pl^{(n)}(E)$. We know $T_E(w_{n+j})$ is acyclic and, on the other hand, for every $z \in T_E(w_{n+j})$ in fact $r_E(s_E^{-1}(z))\subseteq \Pl^{(n)}(E)$, so in particular $r_E(s_E^{-1}(z))\subseteq^{1} \Pl^{(n)}(E)$ hence $\{w_{n+1},\ldots w_q\}$ belongs to (\ref{chungon}). Finally applying CK2 to $v$ we have that $v$ is in the ideal generated by the set \eqref{chungon}. Applying \cite[Corollary 2.4.16 (i)]{AAS} we have that $v$ is in the saturated closure of the set in \eqref{chungon}.

To prove the converse it suffices to see that the set in \eqref{chungon} is contained in $\Pl^{(n+1)}(E)$. Assume that $v$ is a vertex such that $T_E(v)$ is acyclic and 
for any $w\in T_E(v)$ we have  $r(s^{-1}(w)) \subseteq^{1} \Pl^{(n)}(E)$. Then we have in $F$ that $s^{-1}(w)=\vacio$ or $\vert s^{-1}(w)\vert=1$. Thus $v$ is a line point of $F$ and taking into account Proposition \ref{aislu} we get $v\in\Pl^{(n+1)}(E)$.
\end{proof}

\begin{remark}

\label{aisld}\rm
Let $L_K(E)$ be the Leavitt path algebra associated to a graph $E$. Taking into account  Theorem \ref{encasa} for $n=2$, we have that $\Pl^{(2)}(E)$ is the saturated closure of
\begin{equation}\label{chungo}
\{ v \in E^0 \; | \; T_E(v) \text{ is acyclic and }\forall w\in T_E(v), r(s^{-1}(w)) \subseteq^1 \overline{\Pl(E)} \}.
\end{equation}

So, a Leavitt path algebra $L_K(E)$ verifies $\soc^{(2)}({L_K(E)})=L_K(E)$ if and only if $E^0={P_l}^{(2)}(E)$.
\end{remark}

\begin{example} \rm In order to illustrate Theorem \ref{encasa} for $n=2$ we compute the set $\Pl^{(2)}(E)$ for the following graph $E$:

\begin{equation} 
    \xymatrix{ E: & {\bullet}^{v_{1,1}} \ar[r]  & {\bullet}^{v_{1,2}} \ar[r]  & {\bullet}^{v_{1,3}} \ar[r]  &
               {\bullet}^{v_{1,4}} \ar@{.>}[r] &  \\
             & {\bullet}^{v_{2,1}} \ar[r] \ar[u] & {\bullet}^{v_{2,2}} \ar[r] \ar[u] & {\bullet}^{v_{2,3}} \ar[r] \ar[u] & {\bullet}^{v_{2,4}}\ar@{.>}[r] \ar[u] &  \\
             & {\bullet}^{v_{3,1}} \ar[r] \ar[u] & {\bullet}^{v_{3,2}} \ar[r] \ar[u] & {\bullet}^{v_{3,3}} \ar[r] \ar[u] & {\bullet}^{v_{3,4}}\ar@{.>}[r] \ar[u] &  \\
           }\label{despues}
           \end{equation}
\smallskip
In this case we have $\Pl^{(1)}(E)=\{v_{1,i}\}_{i \ge 1}$ and by Theorem \ref{encasa} we get the following equality  $\Pl^{(2)}(E)=\{v_{1,i}\}_{i \ge 1} \cup \{v_{2,j}\}_{j \ge 1}$. According to Proposition \ref{invariante}, the ideals generated by these sets are invariant under isomorphism and of course we have $\soc(L_K(E))=I(\{v_{1,i}\}_{i\ge 1})$ and $\soc^{(2)}(L_K(E))=I(\{v_{1,i}\}_{i \ge 1} \cup \{v_{2,j}\}_{j \ge 1})$.
Also the quotient graph $F = E/\Pl(E)$ is 

\begin{equation} 
    \xymatrix{ F: 
             & {\bullet}^{v_{2,1}} \ar[r]  & {\bullet}^{v_{2,2}} \ar[r] & {\bullet}^{v_{2,3}} \ar[r]  & {\bullet}^{v_{2,4}}\ar@{.>}[r]  &  \\
             & {\bullet}^{v_{3,1}} \ar[r] \ar[u] & {\bullet}^{v_{3,2}} \ar[r] \ar[u] & {\bullet}^{v_{3,3}} \ar[r] \ar[u] & {\bullet}^{v_{3,4}}\ar@{.>}[r] \ar[u] &  \\
           }\label{despues2}
           \end{equation}
\smallskip

\noindent so that $L_K(E)/\soc^{(2)}(L_K(E))=L_K(G)$ which is simple and coincides with its socle. This implies  $\soc^{(3)}(L_K(E))=L_K(E)$. Consequently $\Pl^{(3)}(E)=E^0$.
\begin{equation} 
    \xymatrix{ G:
             & {\bullet}^{v_{3,1}} \ar[r]  & {\bullet}^{v_{3,2}} \ar[r]  & {\bullet}^{v_{3,3}} \ar[r]  & {\bullet}^{v_{3,4}}\ar@{.>}[r]  &  \\
           }\label{despues3}
           \end{equation}
\smallskip
\end{example}

\begin{example}\rm Now this example shows the general case in Theorem \ref{encasa}. Let $E$ be the following graph, denoting $A:=L_K(E)$. We have $\soc^{(n)}(A)\subsetneq\soc^{(n+1)}(A)$ for any $n \in \N$.

\begin{equation} 
    \xymatrix{ E: & {\bullet}^{v_{1,1}} \ar[r]  & {\bullet}^{v_{1,2}} \ar[r]  & {\bullet}^{v_{1,3}} \ar[r]  &
               {\bullet}^{v_{1,4}} \ar@{.>}[r] &  \\
             & {\bullet}^{v_{2,1}} \ar[r] \ar[u] & {\bullet}^{v_{2,2}} \ar[r] \ar[u] & {\bullet}^{v_{2,3}} \ar[r] \ar[u] & {\bullet}^{v_{2,4}}\ar@{.>}[r] \ar[u] &  \\
             & {\bullet}^{v_{3,1}} \ar[r] \ar[u] & {\bullet}^{v_{3,2}} \ar[r] \ar[u] & {\bullet}^{v_{3,3}} \ar[r] \ar[u] & {\bullet}^{v_{3,4}}\ar@{.>}[r] \ar[u] &  \\
           & {\bullet}^{v_{n,1}} \ar[r] \ar@{.>}[u] & {\bullet}^{v_{n,2}} \ar[r] \ar@{.>}[u] & {\bullet}^{v_{n,3}} \ar[r] \ar@{.>}[u] & {\bullet}^{v_{n,4}}\ar@{.>}[r] \ar@{.>}[u] &  \\
           &   \ar@{.>}[u] &  \ar@{.>}[u] &  \ar@{.>}[u] &  \ar@{.>}[u] &  \\
           }\label{manana}
           \end{equation}

\medskip
In this case $\Pl^{(n)}(E)=\{v_{i,j} \; \vert \; i=1, \ldots ,n \text{ and }j\ge 1\}$ for $n \in \N \setminus \{0\}$. And we see that $\Pl^{(n)}(E)\subsetneq\Pl^{(n+1)}(E)$ for any $n$.
\end{example}

\section{The series of functors of a hereditary and saturated one}\label{rosa}

Let $H\colon\grph\to\set$ be a hereditary and saturated point functor. Fix a graph $E$ and
define $H^{(1)}:=H$. Assuming that $H^{(1)},\ldots, H^{(n)}$ are defined, then we define
$H^{(n+1)}$ applied to a graph $E$ as the hereditary and saturated subset of $E^0$ such that the ideal 
$I(H(E/ H^{(n)}(E)))$, which is an ideal in $L_K(E/ H^{(n)}(E))\cong L_K(E)/ I(H^{(n)}(E))$, satisfies 
\begin{equation}\tiny
    I\left(H\left(\frac{E}{H^{(n)}(E)}\right)\right)=\frac{I(H^{(n+1)}(E))}{I(H^{(n)}(E))}.
\end{equation}
\begin{remark}\label{gatoLeo}\rm Observe that, as a consequence of \cite[Corollary 2.9.11]{AAS} and \cite[Proposition 2.4.9]{AAS}, we have that graded ideals of a quotient algebra are quotient of graded ideals (though this fact seems to be more general and does not need the setting of Leavitt path algebras).
\end{remark}
By construction we have $H=H^{(1)}\subset H^{(2)}\subset\cdots\subset H^{(n)}\subset\cdots $
and each $H^{(n)}$ being hereditary and saturated.

\begin{proposition}\label{hinv} If $H$ is invariant under isomorphism, then the series of functors $H^{(n)}$ ($n\ge 1$) are invariant under isomorphism in the sense of definition \ref{hervor}.
\end{proposition}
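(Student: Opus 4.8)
The plan is to argue by induction on $n$. The base case $n=1$ is exactly the hypothesis that $H=H^{(1)}$ is invariant. For the inductive step I would assume $H^{(n)}$ is invariant and fix an arbitrary isomorphism $f\colon L_K(E)\to L_K(F)$; the goal is to show $f(I(H^{(n+1)}(E)))=I(H^{(n+1)}(F))$.

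First I would feed in the inductive hypothesis: since $H^{(n)}$ is $f$-invariant, $f(I(H^{(n)}(E)))=I(H^{(n)}(F))$, so $f$ carries the ideal $I(H^{(n)}(E))$ onto $I(H^{(n)}(F))$ and therefore descends to an isomorphism $\bar f\colon L_K(E)/I(H^{(n)}(E))\to L_K(F)/I(H^{(n)}(F))$ satisfying $\bar f\,\pi_E=\pi_F\,f$, where $\pi_E,\pi_F$ denote the canonical projections. Through the identifications $L_K(E)/I(H^{(n)}(E))\cong L_K(E/H^{(n)}(E))$ and $L_K(F)/I(H^{(n)}(F))\cong L_K(F/H^{(n)}(F))$ of \cite[Corollary 2.4.13]{AAS} (cf. Remark \ref{tita Antonia}), the map $\bar f$ becomes a genuine isomorphism of Leavitt path algebras $L_K(E/H^{(n)}(E))\to L_K(F/H^{(n)}(F))$. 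Applying the invariance of $H$ to this isomorphism yields
$$\bar f\big(I(H(E/H^{(n)}(E)))\big)=I(H(F/H^{(n)}(F))).$$

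To finish, I would translate this equality back upstairs using the defining relation of $H^{(n+1)}$. By definition $I(H(E/H^{(n)}(E)))=I(H^{(n+1)}(E))/I(H^{(n)}(E))=\pi_E(I(H^{(n+1)}(E)))$, and likewise for $F$. Combining with $\bar f\,\pi_E=\pi_F\,f$ gives $\pi_F(f(I(H^{(n+1)}(E))))=\pi_F(I(H^{(n+1)}(F)))$, i.e. $f(I(H^{(n+1)}(E)))$ and $I(H^{(n+1)}(F))$ have the same image under $\pi_F$. Since $I(H^{(n)}(E))\subset I(H^{(n+1)}(E))$, applying $f$ together with the inductive hypothesis shows $I(H^{(n)}(F))\subset f(I(H^{(n+1)}(E)))$; moreover $I(H^{(n)}(F))\subset I(H^{(n+1)}(F))$. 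As both ideals contain the kernel $I(H^{(n)}(F))$ of $\pi_F$ and have equal image under $\pi_F$, they coincide (each equals $\pi_F^{-1}$ of that common image by the correspondence theorem), whence $f(I(H^{(n+1)}(E)))=I(H^{(n+1)}(F))$, completing the induction.

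The step I expect to require the most care is the middle transfer: one must verify that the quotient isomorphism $\bar f$, which a priori only relates the two quotient algebras abstractly, really corresponds under the canonical identifications to an isomorphism \emph{between the Leavitt path algebras of the quotient graphs} $E/H^{(n)}(E)$ and $F/H^{(n)}(F)$, since it is precisely this that licenses invoking the invariance of $H$. The remaining pullback is then routine, the only point worth recording being that both ideals in play contain $I(H^{(n)}(F))=\ker\pi_F$, so that passing to and from the quotient is a bijection on the relevant ideals.
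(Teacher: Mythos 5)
Your proof is correct and follows essentially the same route as the paper's: use the inductive hypothesis to descend $f$ to an isomorphism $\bar f$ between the Leavitt path algebras of the quotient graphs $E/H^{(n)}(E)$ and $F/H^{(n)}(F)$, apply the invariance of $H$ to $\bar f$, and pull the resulting equality of ideals back through the canonical projections. The only difference is that you spell out the final lifting step (via the correspondence theorem, noting both ideals contain $\ker\pi_F=I(H^{(n)}(F))$), which the paper leaves as an implicit implication.
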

\begin{proof}
Assume that $H^{(n)}$ is invariant under isomorphism. To prove that $H^{(n+1)}$ is also invariant, take any isomorphism $f\colon  L_K(E)\to L_K(F)$. Then it induces by passing to the quotient an isomorphism 
\begin{equation}\tiny\bar f\colon L_K\left(\frac{E}{H^{(n)}(E)}\right)\to L_K\left(\frac{F}{ H^{(n)}(F)}\right)
\end{equation}
and consequently $\bar f(I(H(E/ H^{(n)}(E))))= I(H(F/ H^{(n)}(F)))$. Thus 

\begin{equation}
 \tiny \bar f\left(\frac{I(H^{(n+1)}(E))}{I(H^{(n)}(E))}\right)=\frac{I(H^{(n+1)}(F))}{I(H^{(n)}(F))}
\Rightarrow f(I(H^{(n+1)}(E)))=I(H^{(n+1)}(F)).
\end{equation}
\end{proof}

The following result is the analogous to the one given in Theorem \ref{encasa} which was referred to the functor $\Pl^{(n+1)}$. Now we describe graphically $\Pc^{(n+1)}$.

\begin{theorem}\label{enbarco2}
Let $L_K(E)$ be the Leavitt path algebra associated to a graph $E$. Then $\Pc^{(n+1)}(E)$ is the hereditary and saturated closure of
\begin{equation}\label{och}
\mathfrak{S}_n= \{ v \in E^0 \; | \; v \in c^0, c \text{ is a cycle and }\forall f \text{ exit of } c, r(f) \in \Pc\nolimits^{(n)}(E)\}
\end{equation}
for $n  \ge 1$.
\end{theorem}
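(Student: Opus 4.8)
The plan is to mimic the proof of Theorem \ref{encasa}, replacing its socle-theoretic input by the defining relation of the functors $\Pc\nolimits^{(n)}$ given in Section \ref{rosa}. Write $F:=E/\Pc\nolimits^{(n)}(E)$ and identify $L_K(F)\cong L_K(E)/I(\Pc\nolimits^{(n)}(E))$. Since $\Pc\nolimits^{(n+1)}(E)$ is hereditary and saturated and $I(\Pc\nolimits^{(n+1)}(E))$ is graded, the equality $I(\Pc(F))=I(\Pc\nolimits^{(n+1)}(E))/I(\Pc\nolimits^{(n)}(E))$ together with the standard correspondence between graded ideals and hereditary saturated sets (see Remark \ref{gatoLeo} and \cite[Theorem 2.4.8, Corollary 2.4.13]{AAS}) yields the quotient description
\[
\Pc\nolimits^{(n+1)}(E)=\{v\in E^0 \ \vert \ v+I(\Pc\nolimits^{(n)}(E))\in\overline{\Pc(F)}^F\},
\]
which is the exact analogue of Proposition \ref{aislu}. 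From here I will prove the two inclusions $\overline{\mathfrak{S}_n}\subseteq\Pc\nolimits^{(n+1)}(E)$ and $\Pc\nolimits^{(n+1)}(E)\subseteq\overline{\mathfrak{S}_n}$; because $\Pc\nolimits^{(n+1)}(E)$ is hereditary and saturated, the first reduces to checking $\mathfrak{S}_n\subseteq\Pc\nolimits^{(n+1)}(E)$.

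For $\mathfrak{S}_n\subseteq\Pc\nolimits^{(n+1)}(E)$, take $v\in\mathfrak{S}_n$ lying on a cycle $c$ all of whose exits range inside $\Pc\nolimits^{(n)}(E)$. If some vertex of $c^0$ lies in $\Pc\nolimits^{(n)}(E)$, then $c$ being a cycle, heredity forces $c^0\subseteq\Pc\nolimits^{(n)}(E)$, so $v\in\Pc\nolimits^{(n)}(E)\subseteq\Pc\nolimits^{(n+1)}(E)$. Otherwise $c$ survives in $F$: its edges persist (their ranges lie in $c^0$, which avoids $\Pc\nolimits^{(n)}(E)$) while every exit $f$ of $c$ disappears, since $r(f)\in\Pc\nolimits^{(n)}(E)$. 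Hence $c$ is a cycle without exits in $F$, so $v+I(\Pc\nolimits^{(n)}(E))\in\Pc(F)\subseteq\overline{\Pc(F)}^F$, and the quotient description gives $v\in\Pc\nolimits^{(n+1)}(E)$.

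Before the converse I will record the auxiliary containment $\Pc\nolimits^{(n)}(E)\subseteq\overline{\mathfrak{S}_n}$, proved by induction on $n$ so that the whole argument becomes a single induction on $n$. For $n=1$, every $v\in\Pc(E)$ lies on a cycle without exits and so satisfies the (vacuous) exit condition, giving $v\in\mathfrak{S}_1$; hence $\Pc\nolimits^{(1)}(E)=\overline{\Pc(E)}\subseteq\overline{\mathfrak{S}_1}$. For $n>1$, monotonicity $\Pc\nolimits^{(n-1)}(E)\subseteq\Pc\nolimits^{(n)}(E)$ gives $\mathfrak{S}_{n-1}\subseteq\mathfrak{S}_n$ (the same cycles now have their exits controlled by the larger set), so by the theorem at level $n-1$ one has $\Pc\nolimits^{(n)}(E)=\overline{\mathfrak{S}_{n-1}}\subseteq\overline{\mathfrak{S}_n}$.

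For the converse inclusion take $v\in\Pc\nolimits^{(n+1)}(E)$, so $v+I(\Pc\nolimits^{(n)}(E))\in\overline{\Pc(F)}^F=\bigcup_{i\ge 0}\Lambda^i$, and argue by induction on the layer $i$. If $v\in\Lambda^0=\Pc(F)$, then $v$ lies on a cycle of $F$ without exits; lifting this cycle to $E$ (the edge set of $F$ is contained in that of $E$), every exit of it in $E$ must have been deleted in the quotient, i.e.\ its range lies in $\Pc\nolimits^{(n)}(E)$, so $v\in\mathfrak{S}_n$. If $v\in\Lambda^{i+1}\setminus\Lambda^i$, then $v$ is regular in $F$, hence regular in $E$ by row-finiteness, and $r_F(s_F^{-1}(v))\subseteq\Lambda^i$; writing $r_E(s_E^{-1}(v))$ as this set together with the vertices $u_k$ killed in $F$ (which lie in $\Pc\nolimits^{(n)}(E)$), the induction on $i$ puts the former in $\overline{\mathfrak{S}_n}$ and the auxiliary containment puts the latter in $\overline{\mathfrak{S}_n}$. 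Since $\overline{\mathfrak{S}_n}$ is saturated, $v\in\overline{\mathfrak{S}_n}$. The main obstacle is exactly this bookkeeping of edges across the quotient: one must track carefully how cycles and their exits correspond between $E$ and $F$, and verify that the edges deleted in passing to $F$ contribute only vertices already governed by $\Pc\nolimits^{(n)}(E)$. This is what the auxiliary containment and the double induction (outer on $n$, inner on the layers $\Lambda^i$) are designed to supply.
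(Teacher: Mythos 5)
Your proposal is correct, and its skeleton coincides with the paper's proof: the quotient description of $\Pc\nolimits^{(n+1)}(E)$ through $F=E/\Pc\nolimits^{(n)}(E)$ (which the paper uses inline via the isomorphism $\theta$ of Remark \ref{tita Antonia} rather than isolating it as an analogue of Proposition \ref{aislu}), the inclusion $\mathfrak{S}_n\subseteq\Pc\nolimits^{(n+1)}(E)$ by noting that a cycle all of whose exits land in $\Pc\nolimits^{(n)}(E)$ becomes exit-free in $F$, and the reverse inclusion by climbing the layers $\Lambda^i$ of $\overline{\Pc(F)}^F$, absorbing the edges deleted in the quotient by means of the auxiliary containment $\Pc\nolimits^{(n)}(E)\subseteq\overline{\mathfrak{S}_n}$. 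The one genuine structural difference is how that containment is obtained: the paper proves it as a standalone formula \eqref{tutu} by a separate double induction (on $n$, with an inner induction on the layers of the closure of $\Pc(E/\Pc\nolimits^{(n-1)}(E))$), which occupies roughly half of the published argument, whereas you fold it into a single outer induction on $n$, deriving it in one line from the theorem at level $n-1$: since $\Pc\nolimits^{(n-1)}(E)\subseteq\Pc\nolimits^{(n)}(E)$ gives $\mathfrak{S}_{n-1}\subseteq\mathfrak{S}_n$, one gets $\Pc\nolimits^{(n)}(E)=\overline{\mathfrak{S}_{n-1}}\subseteq\overline{\mathfrak{S}_n}$, the case $n=1$ being immediate. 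This induction is not circular (level $n$ only invokes level $n-1$) and shortens the proof noticeably. Two further points in your favor: you explicitly treat the case where the cycle of a vertex of $\mathfrak{S}_n$ meets $\Pc\nolimits^{(n)}(E)$ (heredity then puts the whole cycle, hence $v$, inside $\Pc\nolimits^{(n)}(E)$), a case the paper passes over silently; and your layer induction establishes $\Lambda^i\subseteq\overline{\mathfrak{S}_n}$ outright instead of the paper's restriction to $T_F(v)\cap\Lambda^i$, which is simpler and equally sufficient. The only blemish, shared with the paper's own Proposition \ref{aislu}: for $v\in\Pc\nolimits^{(n)}(E)$ the coset $v+I(\Pc\nolimits^{(n)}(E))$ is $0$ and not a vertex of $F$, so your displayed quotient description should be read as applying to $v\notin\Pc\nolimits^{(n)}(E)$, the remaining vertices being covered anyway by the auxiliary containment.
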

\begin{proof}
Firstly we prove the formula:
\begin{equation}\label{tutu}
     \Pc\nolimits^{(n)}(E) \subseteq \overline{\mathfrak{S}_n}^E\quad (\hbox{ in the sequel } \overline{\mathfrak{S}_n}^E \hbox{ will be shortened }\overline{\mathfrak{S}_n}).
\end{equation}
For $n=1$, it suffices to prove that $\Pc(E)\subset\mathfrak{S}_1$ which is trivial (because there is no exits in the cycles involved). Assume that $\Pc^{(k)}(E)\subset\overline{\mathfrak{S}_k}$ for $k<n$. Take $v\in\Pc^{(n)}(E)$ but 
$v\notin\Pc^{(n-1)}(E)$. Then 
$$v+I(\Pc\nolimits^{(n-1)}(E))\in \frac{I(\Pc^{(n)}(E))}{I(\Pc^{(n-1)}(E))}\cong 
I[\Pc(E/\Pc\nolimits^{(n-1)}(E))]$$
so that $v$ is in the hereditary and saturated closure of $\Pc(E/\Pc^{(n-1)}(E))$ which is (by \eqref{onion}) $\cup_{i\ge 0}\Lambda^i$ (closure in the quotient graph). We will prove by induction that each $\Lambda^i$ is contained in $\overline{\mathfrak{S}_n}$. 
If $v\in\Lambda^0$ then $v$ is in a cycle of $E$. If this cycle has no exits then $v \in \mathfrak{S}_n$. And if $v \in c^0$ and $c$ has an exit $f$  then since $\Pc\nolimits^{(n)}(E)$ is hereditary, $r(f) \in \Pc\nolimits^{(n)}(E)$. Consequently in this case 
$v\in\mathfrak{S}_n$. On the other hand, assume $\Lambda^j\subset\overline{\mathfrak{S}_n}$ 
for $j<i$. Take now $v\in\Lambda^i$ with $i>0$ (but $v\notin\Lambda^{i-1}$). Let $s_E^{-1}(v)=\{g_1,\ldots,g_r\}$, then we may assume (reordering if necessary) that for the graph $G:=E/\Pc^{(n-1)}(E)$, we have 
$s_G^{-1}(v)=\{g_1,\ldots, g_l\}$; while the others $\{g_{l+1},\ldots, g_r\}$ satisfy 
$r_E(g_i)\in \Pc^{(n-1)}(E)\subset \overline{\mathfrak{S}_{n-1}} \subset \overline{\mathfrak{S}_n}$. Note that 
$r_G(s_G^{-1}(v))\in\Lambda^{i-1}\subset\overline{\mathfrak{S}_n}$. So $r_E(s_E^{-1}(v))\subset\overline{\mathfrak{S}_n}$ hence $v\in\overline{\mathfrak{S}_n}$.
So far we have proved formula \eqref{tutu}.

Let us prove now that $\Pc\nolimits^{(n+1)}(E)$ is contained in the saturated closure of the set $\mathfrak{S}_n$.
If $v\in\Pc\nolimits^{(n+1)}(E)\setminus \Pc\nolimits^{(n)}(E)$ then $v+I(\Pc\nolimits^{(n)}(E))\in I(\Pc\nolimits^{(n+1)}(E))/I(\Pc\nolimits^{(n)}(E))\triangleleft L_K(E)/I(\Pc\nolimits^{(n)}(E))$. Let us denote 
$F:=E/\Pc\nolimits^{(n)}(E)$ the quotient graph.  Let $\theta\colon L_K(E)/I(\Pc\nolimits^{(n)}(E))\to L_K(F)$ be as explained in Remark \ref{tita Antonia}, that is, $\theta$ is the canonical isomorphism such that 
$v+I(\Pc\nolimits^{(n)}(E))\buildrel{\theta}\over{\mapsto} v$ (as element of the graph $F$). Restricting $\theta$, we have an isomorphism
$\theta\colon I(\Pc\nolimits^{(n+1)}(E))/I(\Pc\nolimits^{(n)}(E))\to I(\Pc^{(1)}(F))$. Consequently, given that $v+I(\Pc\nolimits^{(n)}(E))\in I(\Pc\nolimits^{(n+1)}(E))/I(\Pc\nolimits^{(n)}(E))$, applying
$\theta$ we have $v\in \Pc^{(1)}(F)$ (in the graph $F$). Write now $\Pc^{(1)}(F)=\cup_{i\ge 0}\Lambda^i(\Pc(F))$ (again \eqref{onion}) and let us prove by induction that 
\begin{equation}\label{kjurg}
T_F(v)\cap\Lambda^i\subset\overline{\mathfrak{S}_n}.
\end{equation}
For $i=0$ we must check that $T_F(v)\cap\Pc(F)\subset\overline{\mathfrak{S}_n}$. So, if $w\in T_F(v)\cap \Pc(F)$ then
$w\in c^0$ where the cycle has no exit in $F$ but has exits in $E$. If $f$ is any exit of $c$ then $r(f)\in E^0\setminus F^0$ hence $r(f)\in \Pc^{(n)}(E)$. Whence $w\in\mathfrak{S}_n$. Assuming $T_F(v)\cap\Lambda^i\subset \overline{\mathfrak{S}_n}$, we prove $T_F(v)\cap\Lambda^{i+1}\subset \overline{\mathfrak{S_n}}$: 
if $u\in T_F(v)\cap\Lambda^{i+1}$, then $r_F(s_F^{-1}(u))\in T_F(v)\cap \Lambda^{i}\subset \overline{\mathfrak{S}_n}$.
However, in order to conclude that $u\in\overline{\mathfrak{S}_n}$ we must see that 
$r_E(s_E^{-1}(u))\subset \overline{\mathfrak{S}_n}$. But $r_E(s_E^{-1}(u))\setminus r_F(s_F^{-1}(u))\subset \Pc\nolimits^{(n)}(E)\subset \overline{\mathfrak{S}_n}$ by \eqref{tutu}. Thus $r_E(s_E^{-1}(u))\in \overline{\mathfrak{S}_n}$, so  $u\in\overline{\mathfrak{S}_n}$. This completes the induction proof of formula 
\eqref{kjurg}. Now, since $T_F(v)\subset \overline{\Pc(F)}=\cup_{i\ge 0}\Lambda^i(\Pc(F))$, we have  
$T_F(v)=T_F(v)\cap\overline{\Pc(F)}=\cup_{i\ge 0}(T_F(v)\cap \Lambda^i)\subset\overline{\mathfrak{S}_n}$.
Consequently  $T_F(v)\subset \overline{\mathfrak{S}_n}$ implying $v\in\overline{\mathfrak{S}_n}$.

For the converse relation it suffices to see that $\mathfrak{S}_n$ is contained in $\Pc^{(n+1)}(E)$.
So consider $v$ a vertex in $\mathfrak{S}_n$. If $v$ is in a cycle without exits of $E$ then $v\in\Pc^{(1)}(E)\subset\Pc^{(n+1)}(E)$ so we are done (note that the sets $\Pc^{(n)}(E)$ form an ascending chain:  $\Pc^{(k)}(E)\subseteq \Pc^{(k+1)}(E)$ for any $k$).
If $v$ is in a cycle with exits $c$ of $E$, then for any exit $f$ of $c$ we know $r_E(f)\in\Pc^{(n)}(E)$. Thus, relative to the graph $F:=E/\Pc^{(n)}(E)$ we have that
the exit $f$ is not an edge of $F$ because its target is not in $F^0$. So the cycle $c$ has no exit in $F$. Whence 
$$v\in I(\Pc\nolimits^{(n)}(F))=\theta\left(\frac{I(\Pc^{(n+1)}(E))}{I(\Pc^{(n)}(E))}\right)$$ 
hence $v=\theta(w+I(\Pc^{(n)}(E)))$ for some $w\in I(\Pc^{(n+1)}(E))$. But $\theta(w+I(\Pc^{(n)}(E)))=w$ which implies $v=w\in I(\Pc^{(n+1)}(E)) \cap E^0$, i.e., $v \in \Pc^{(n+1)}(E)$.
\end{proof}

\begin{example} \rm Now we compute the sets $\Pc\nolimits^{(n)}(E)$ for the following graph  in order to illustrate Theorem \ref{enbarco2}. Actually we have that $\Pc^{(n)}(E)= \{v_1, \ldots, v_n\}$ for $n \ge 1$. Also observe that $\Pc^{(n})(E) \subsetneq \Pc^{(n+1)}(E)$ for any $n$.

\begin{equation} 
   E:   \xymatrix{
    \ar@{.>}[r]  & {\bullet}_{v_{4}}\ar@(ul,ur) \ar[r]  & {\bullet}_{v_{3}} \ar[r] \ar@(ul,ur) & {\bullet}_{v_{2}} \ar[r] \ar@(ul,ur) & {\bullet}_{v_{1}}\ar@(ur,dr) &  \\
           }\label{conica}
           \end{equation}
\smallskip
\end{example}

\subsection{Mixed point-functors}\label{colada}
Finally, we define a kind of composition of functors which gives new hereditary and saturated functors when it is applied to hereditary and saturated ones. Furthermore, if the starting functors are invariant, then the composite is also invariant. 
Assume that for $i=1,2$ we have hereditary and saturated point functors $H_i$. Then we can construct a new point-functor $H_2* H_1$ by the following procedure: for any graph $E$ consider the ideal
$I[H_2(E/H_1(E))]\triangleleft L_K(E/H_1(E))\buildrel{\theta}\over\cong L_K(E)/I(H_1(E))$. Thus there exists a unique hereditary and saturated subset of $E$ (see Remark \ref{gatoLeo}), denoted $(H_2\ast H_1)(E)$, such that 
\begin{equation}
I[(H_2\ast H_1)(E)]/I(H_1(E))
\buildrel{\theta}\over{\cong}
I[H_2(E/H_1(E))]. 
\end{equation}

\begin{example}{\rm For instance, in the graph $E$, below we can consider the functors $\overline{\Pl}$ and $\overline{\Pc}$.
\[
\xygraph{!{(0.25,0)}*+{E\colon}
!{<0cm,0cm>;<1cm,0cm>:<0cm,1cm>::}
!{(2,0)}*+{\bullet}="d"
!{(3,0)}*+{\bullet}="g"
!{(3,-0.25)}*+{\hbox{\tiny $v$}}="v"
!{(2,-0.25)}*+{\hbox{\tiny $u$}}="u"
!{(4,0)}*+{\bullet}="h"
!{(4,-0.25)}*+{\hbox{\tiny $w$}}="e"
"d" :@(ul,ur) "d"
"d":"g" 
"g":"h"
"g" :@(ul,ur) "g"
}\]
Then $\overline{\Pl}(E)=\{w\}$ and $\overline{\Pc}(E)=\vacio$. However
$(\overline{\Pc}\ast \overline{\Pl})(E)=\{v,w\}$ and 
$(\overline{\Pc}\ast (\overline{\Pc}\ast \overline{\Pl}))(E)=E^0$.
In the example, we see that in general we do not have commutativity of the operation $*$ because $(\overline{\Pl}\ast \overline{\Pc})(E)=\{w\}$.
}
\end{example}

The empty set point functor $\vacio\colon\grph\to\set$ mapping any graph to the emptyset is an identity element for the $*$-operation:
$$\frac{I((H*\vacio)(E))}{I(\vacio(E))}{\buildrel{\hbox{\tiny def}}\over\cong} I(H(E/\vacio(E)))\cong I(H(E))$$
whence $H*\vacio\cong H$. On the other hand 
$$\frac{I((\vacio* H)(E))}{I(H(E))}{\buildrel{\hbox{\tiny def}}\over\cong} I(\vacio(E/H(E)))=0$$
implying $\vacio*H\cong H$.
Also, it is remarkable that:
\begin{proposition}
If $H_i$  are invariant hereditary and saturated point functors for $i\in \{1,2\}$, then $H_2\ast H_1$ is also invariant.
\end{proposition}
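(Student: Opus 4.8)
The plan is to reduce invariance of $H_2\ast H_1$ to the separate invariance of $H_1$ and $H_2$, transporting everything through the canonical quotient isomorphisms $\theta$ from Remark \ref{tita Antonia}. Fix an isomorphism $f\colon L_K(E)\to L_K(F)$. Since $H_1$ is invariant we have $f(I(H_1(E)))=I(H_1(F))$, so $f$ descends to the quotients and induces an isomorphism $\bar f\colon L_K(E)/I(H_1(E))\to L_K(F)/I(H_1(F))$ satisfying $\bar f\circ\pi_E=\pi_F\circ f$, where $\pi_E,\pi_F$ are the respective canonical projections and $\bar f(x+I(H_1(E)))=f(x)+I(H_1(F))$.

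Next I would move to the quotient graphs. Using the canonical isomorphisms $\theta_E\colon L_K(E/H_1(E))\to L_K(E)/I(H_1(E))$ and $\theta_F\colon L_K(F/H_1(F))\to L_K(F)/I(H_1(F))$, set $g:=\theta_F^{-1}\circ\bar f\circ\theta_E$, which is an isomorphism $L_K(E/H_1(E))\to L_K(F/H_1(F))$ between the Leavitt path algebras of the two quotient graphs. Because $H_2$ is invariant, applying it to the isomorphism $g$ yields $g(I[H_2(E/H_1(E))])=I[H_2(F/H_1(F))]$.

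Now I would pull this equality back. By the defining property of the $\ast$-operation, $\theta_E$ carries $I[H_2(E/H_1(E))]$ onto $I[(H_2\ast H_1)(E)]/I(H_1(E))$, and similarly for $F$. Since $\bar f=\theta_F\circ g\circ\theta_E^{-1}$, combining this with the previous paragraph gives
$$\bar f\left(\frac{I[(H_2\ast H_1)(E)]}{I(H_1(E))}\right)=\frac{I[(H_2\ast H_1)(F)]}{I(H_1(F))}.$$
Finally, using $\bar f\circ\pi_E=\pi_F\circ f$ and the fact that $I(H_1(E))\subseteq I[(H_2\ast H_1)(E)]$ (so that $\pi_E(I[(H_2\ast H_1)(E)])=I[(H_2\ast H_1)(E)]/I(H_1(E))$), I would push this up to obtain $\pi_F(f(I[(H_2\ast H_1)(E)]))=I[(H_2\ast H_1)(F)]/I(H_1(F))=\pi_F(I[(H_2\ast H_1)(F)])$.

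The genuinely delicate step is the very last one: deducing equality of the ideals $f(I[(H_2\ast H_1)(E)])$ and $I[(H_2\ast H_1)(F)]$ from the equality of their images under $\pi_F$. This is legitimate because both ideals contain $\ker\pi_F=I(H_1(F))$ (the first since $f(I(H_1(E)))=I(H_1(F))$ and $I(H_1(E))\subseteq I[(H_2\ast H_1)(E)]$, the second by construction), so by the lattice isomorphism between ideals of $L_K(F)$ containing $I(H_1(F))$ and ideals of the quotient — the correspondence underlying Remark \ref{gatoLeo} — two such ideals with the same $\pi_F$-image must coincide. Hence $f(I[(H_2\ast H_1)(E)])=I[(H_2\ast H_1)(F)]$, which is exactly invariance of $H_2\ast H_1$; the remaining work is only the routine bookkeeping of verifying the compatibility of the two copies of $\theta$ with $\bar f$.
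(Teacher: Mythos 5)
Your proof is correct and follows essentially the same route as the paper's: descend $f$ to the quotients using invariance of $H_1$, conjugate by the canonical isomorphisms of Remark \ref{tita Antonia} to obtain an isomorphism $g$ between $L_K(E/H_1(E))$ and $L_K(F/H_1(F))$, apply invariance of $H_2$ to $g$, and translate back via the defining property of $\ast$. In fact you are slightly more careful than the paper at the last step, where the paper merely asserts the conclusion while you justify it explicitly through the correspondence between ideals containing $\ker\pi_F$ and ideals of the quotient.
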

\begin{proof}
We know that for any isomorphism $f\colon L_K(E)\to L_K(E')$ one has $f(I(H_i(E)))=I(H_i(E'))$, ($i=1,2$).
Consider the induced isomorphisms 

$$\begin{matrix}
\omega_i\colon L_K(E)/I(H_i(E))\cong L_K(E')/I(H_i(E'))\cr 
{ x+I(H_i(E))\buildrel{\omega_i}\over {\mapsto} f(x)+I(H_i(E'))}
\end{matrix}$$

together with the canonical isomorphisms $$\theta_i\colon L_K(E)/I(H_i(E))\to L_K(E/H_i(E)),\quad \theta_i'\colon L_K(E')/I(H_i(E'))\to L_K(E'/H_i(E')).$$ Then define $\bar f_i$ as the unique isomorphism
$\bar f_i\colon L_K(E/H_i(E))\to L_K(E'/H_i(E'))$ making commutative the diagram
\begin{center}
\begin{tikzcd}
\frac{L_K(E)}{I(H_
(E))} \arrow[r, "\omega_i"] \arrow[d, "\theta_i"']
& \frac{L_K(E')}{I(H_i(E'))} \arrow[d, "\theta_i'"] \\
L_K(\frac{E}{H_i(E)}) \arrow[r, "\bar f_i"]
& L_K(\frac{E'}{H_i(E')})
\end{tikzcd}
\end{center}
Then $\theta_i$ restricts to an isomorphism $\theta_i\colon I((H_j\ast H_i)(E))/I(H_i(E))\cong I(H_j(E/H_i(E)))$ 
so that $\bar f_i\theta_i$ is an isomorphism and
{\tiny$$  \bar f_i \theta_i \left[I((H_j\ast H_i)(E))/I(H_i(E))\right]\cong \bar f_i[I(H_j(E/H_i(E)))]=I(H_j(E'/H_i(E')))\cong I((H_j\ast H_i)(E'))/I(H_i(E'))$$}
implying that 
$f[I((H_j\ast H_i)(E))]=I((H_j\ast H_i)(E'))$.
\end{proof}

The $*$ operation has a kind of associativity property which can be formalized in terms of natural isomorphism of functors:

\begin{proposition}\label{associative}
Let $H_i$  be hereditary and saturated invariant point functors for $i\in \{1,2,3\}$, then there is a natural isomorphism of functors $(H_1*H_2)*H_3\cong H_1*(H_2*H_3)$.
\end{proposition}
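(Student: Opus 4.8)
The plan is to fix a graph $E$ and to prove the sharper statement that $((H_1*H_2)*H_3)(E)=(H_1*(H_2*H_3))(E)$ as \emph{subsets} of $E^0$. Since both sides are point functors, i.e. subfunctors of $\mathcal{E}^0$, an equality of underlying sets for every $E$ is already a natural isomorphism with identity components: the naturality squares commute trivially because any morphism $\a\colon E\to E'$ of $\grph$ acts on vertices by a bijection $\mathcal{E}^0(\a)$ which both functors restrict in the same way. So I would reduce the associativity of $*$ to this set equality. Write $A:=H_3(E)$, $F:=E/A$ and $B:=(H_2*H_3)(E)$. By the definition of $*$ we have $A\subseteq B$ in $\h_E$ and the image of $B$ in $F$ is exactly $H_2(F)$; for a row-finite graph this reads $B=A\cup H_2(F)$ with $H_2(F)\subseteq F^0=E^0\setminus A$, so that $E^0\setminus B=F^0\setminus H_2(F)$.

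First I would unwind both triple products to a single quotient. By definition $(H_1*(H_2*H_3))(E)$ is the hereditary and saturated subset of $E$ containing $B$ whose image, under the canonical isomorphism $\theta_B\colon L_K(E)/I(B)\cong L_K(E/B)$, corresponds to $I(H_1(E/B))$; hence $I((H_1*(H_2*H_3))(E))$ is the pullback along $L_K(E)\to L_K(E)/I(B)$ of $\theta_B^{-1}(I(H_1(E/B)))$. On the other side, $((H_1*H_2)*H_3)(E)$ is obtained by passing to $F=E/A$, forming $(H_1*H_2)(F)$ (the subset of $F$ containing $H_2(F)$ corresponding to $I(H_1(F/H_2(F)))$), and pulling back to $E$. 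The structural input here is the third isomorphism theorem for graded ideals (Remark \ref{gatoLeo} together with \cite[Corollary 2.4.13, Proposition 2.4.9, Corollary 2.9.11]{AAS}): since $A\subseteq B$, the ideal $I(B)/I(A)$ of $L_K(E)/I(A)\cong L_K(F)$ is precisely $I(H_2(F))$, and the composite $L_K(E)/I(B)\cong (L_K(E)/I(A))/(I(B)/I(A))\cong L_K(F)/I(H_2(F))\cong L_K(F/H_2(F))$ is a canonical isomorphism $\Theta$. A short diagram chase then shows that $I(((H_1*H_2)*H_3)(E))$ is likewise the pullback along the \emph{same} projection $L_K(E)\to L_K(E)/I(B)$, namely of $\Theta^{-1}(I(H_1(F/H_2(F))))$.

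It therefore remains to identify the two ideals inside $L_K(E)/I(B)$, that is, to prove $\theta_B^{-1}(I(H_1(E/B)))=\Theta^{-1}(I(H_1(F/H_2(F))))$; equivalently, that the canonical isomorphism $\Phi:=\theta_B\circ\Theta^{-1}\colon L_K(F/H_2(F))\cong L_K(E/B)$ carries $I(H_1(F/H_2(F)))$ onto $I(H_1(E/B))$. Here the invariance hypothesis on $H_1$ does the decisive work: $\Phi$ is an isomorphism of Leavitt path algebras, so invariance of $H_1$ (Definition \ref{hervor}) gives $\Phi(I(H_1(F/H_2(F))))=I(H_1(E/B))$ at once. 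Pulling this single ideal of $L_K(E)/I(B)$ back to $L_K(E)$ yields one and the same graded ideal, and since $H\mapsto I(H)$ is a bijection between $\h_E$ and the graded ideals, the two subsets of $E^0$ coincide. (As a sanity check: in the row-finite setting $E^0\setminus B=F^0\setminus H_2(F)$ forces $E/B=F/H_2(F)$ as graphs, so $\Phi$ is the identity and mere functoriality of $H_1$ suffices; invariance is the tool that makes the argument robust under the canonical algebra identifications.)

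The main obstacle is the bookkeeping in the second step: one must verify that the three canonical identifications constituting $\Theta$ compose coherently with $\theta_B$, so that $\Phi$ is genuinely an isomorphism of the two Leavitt path algebras on whose $H_1$-ideals invariance is applied. This is routine manipulation with the quotient-by-hereditary-saturated machinery of \cite{AAS}, but it is where all the care lies; once $\Phi$ is correctly pinned down, invariance of $H_1$ closes the argument immediately. Note that $H_2$ and $H_3$ enter only to guarantee that $B=(H_2*H_3)(E)$ is a well-defined object of $\h_E$, so their invariance is not used in the core identification, although it is needed (via the preceding proposition) to know that all the triple products are themselves invariant functors.
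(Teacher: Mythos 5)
Your proposal is correct, and at its core it is the same argument as the paper's: everything reduces to the canonical isomorphism $L_K\bigl(E/(H_2*H_3)(E)\bigr)\cong L_K\bigl((E/H_3(E))/H_2(E/H_3(E))\bigr)$ furnished by the third isomorphism theorem for graded ideals (Remark \ref{gatoLeo}), across which the invariance of $H_1$ is applied; invariance of $H_2$ and $H_3$ plays no role in the core identification in either argument. The differences are in the packaging, and your version yields a little more. The paper composes a chain of isomorphisms and concludes $I\bigl((H_1*(H_2*H_3))(E)\bigr)\cong I\bigl(((H_1*H_2)*H_3)(E)\bigr)$, from which the natural isomorphism of functors is induced; you instead pull both ideals back to $L_K(E)$ along the single projection $L_K(E)\to L_K(E)/I(B)$, with $B=(H_2*H_3)(E)$, and obtain literal equality of the two hereditary and saturated subsets of $E^0$, so the natural isomorphism has identity components. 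Moreover, your parenthetical sanity check is a genuine strengthening rather than a mere consistency check: since $B=H_3(E)\sqcup H_2(E/H_3(E))$, the graphs $E/B$ and $(E/H_3(E))/H_2(E/H_3(E))$ coincide (same vertex set $E^0\setminus B$ and same edge set), and evaluating on the generators $v$, $e$, $e^*$ shows that your $\Theta$ agrees with $\theta_B$, so $\Phi$ is the identity; consequently associativity of $*$ actually holds for arbitrary hereditary and saturated point functors, the invariance hypothesis being needed only to know (via the preceding proposition) that the triple products are themselves invariant.
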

\begin{proof}
{\tiny $$L_K\left(\frac{E}{H_2*H_3(E)}\right)\cong\frac{L_K(E)}{I(H_2*H_3(E))}\cong\frac{L_K(E)/I(H_3(E))}{I(H_2*H_3(E))/I(H_3(E))}\cong \frac{L_K(E/H_3(E))}{I(H_2(E/H_3(E)))}\cong
L_K\left(\frac{E/H_3(E)}{H_2(E/H_3(E))}\right).$$}
Let $f$ be the isomorphism from $L_K\left(\frac{E}{H_2*H_3(E)}\right)$ to $L_K\left(\frac{E/H_3(E)}{H_2(E/H_3(E))}\right)$, applying that $H_1$ is invariant, we have  
{\tiny $$ \frac{I((H_1*(H_2*H_3))(E)}{I(H_2*H_3(E))}\cong I\left(H_1\left(\frac{E}{H_2*H_3(E)}\right)\right){\buildrel{f}\over{\cong}}
\ I\left(H_1\left(\frac{E/H_3(E)}{H_2(E/H_3(E))}\right)\right)\cong \frac{I((H_1*H_2)(E/H_3(E))}{I(H_2(E/H_3(E))}\cong$$ $$\frac{I(((H_1*H_2)*H_3))(E)}{I(H_2*H_3(E))}.$$}
But then 
 $I((H_1*(H_2*H_3))(E))\cong I(((H_1*H_2)*H_3)(E))$ which induces the natural isomorphism of functors $(H_1*H_2)*H_3\cong H_1*(H_2*H_3)$.
\end{proof}


\begin{thebibliography}{10}

\bibitem{AALP}\textsc{Gene Abrams, Pham N. Anh, Adel Louly, Enrique Pardo}, \textit{The classification question for Leavitt path algebras}, J. Algebra {\bf 320} (2008), 1983–2026.


\bibitem{AAS} \textsc{Gene Abrams, Pere Ara, Mercedes Siles Molina}, Leavitt path algebras. Lecture Notes in Mathematics 2191, Springer (2017).

\bibitem{ARS} \textsc{Gene Abrams, Kulumani Rangaswamy,  Mercedes Siles Molina}, \textit{The socle series of a Leavitt path algebra}. Isr. J. Math. \textbf{184}  (2011), 413--435.











\bibitem{ABS} \textsc{Gonzalo Aranda Pino,  Jose Brox, Mercedes Siles Molina}, \textit{Cycles in Leavitt path algebras by means of idempotents}. Forum Math. \textbf{27}  (2015), 601--633.





\bibitem{AMMS1} \textsc{Gonzalo Aranda Pino, Dolores Mart\'in Barquero, C\'andido Mart\'in Gonz\'alez, Mercedes Siles Molina}, \textit{The socle of a Leavitt path algebra}. J. Pure Appl. Algebra \textbf{212} (2008), 500--509.

\bibitem{AMMS2} \textsc{Gonzalo Aranda Pino, Dolores Mart\'in Barquero, C\'andido Mart\'in Gonz\'alez, Mercedes Siles Molina}, \textit{Socle theory for Leavitt path algebras of arbitrary graphs}. Rev. Mat. Iberoam. \textbf{26} (2) (2010), 611--638.

 \bibitem{CGKS} \textsc{Vural Cam, Cristóbal Gil Canto, Muge Kanuni,  Mercedes Siles Molina}, \textit{Largest ideals in Leavitt path algebras}. Mediterr. J. Math.(2020), 17:66. 

 \bibitem{CMMS19} \textsc{Lisa O. Clark, Dolores Mart\'in Barquero, C\'andido Mart\'in Gonz\'alez; Mercedes Siles Molina}, \textit{Using the Steinberg algebra model to determine the center of any Leavitt path algebra}. Israel J. Math. {\bf 230} (2019), no. 1, 23–44.

\bibitem{CMMS}\textsc{Lisa O. Clark, Dolores Mart\'{\i}n Barquero, C\'andido Mart\'{\i}n Gonz\'alez, Mercedes Siles Molina}, \textit{ Using Steinberg algebras to study decomposability of Leavitt path algebras}. Forum Math. {\bf 6} (29) (2017), 1311--1324.

\bibitem{CMMSS} \textsc{Mar\'ia G. Corrales Garc\'\i a,  Dolores Mart\'{\i}n Barquero, C\'andido Mart\'{\i}n Gonz\'{a}lez, Mercedes Siles Molina, Jos\'e F. Solanilla Hern\'andez}, \textit{Extreme cycles. The center of a Leavitt path algebra}. Pub. Mat. \textbf{60} (2016), 235--263.

\bibitem{DU}\textsc{James Dugundji}, \text{ Topology}. Allyn and Bacon, Inc. (1966).

\bibitem{DanielDanilo}\textsc{Daniel Goncalves, Danilo Royer}, \textit{A note on the regular ideals of Leavitt path algebras}. Preprint. https://arxiv.org/pdf/2006.03634.pdf

 \bibitem{Hamana} \textsc{Masamichi Hamana}, \textit{The centre of the regular monotone completion of a $C^*$-algebra}. J. London Math. Soc. (2) {\bf 26} (3) (1982), 522--530.





\bibitem{R} \textsc{Kulumani Rangaswamy}, \textit{The multiplicative ideal theory of Leavitt path algebras}. J. Algebra \textbf{487} (2017), 173--199.




\end{thebibliography}
\end{document}